\pdfoutput=1
\documentclass[10pt,leqno]{amsart}

\usepackage{graphicx}
\usepackage{indentfirst,csquotes}

\usepackage{amssymb,amsthm,amsmath}
\usepackage{amsfonts}
\usepackage{booktabs}
\usepackage{bussproofs}
\usepackage{tikz}
\usetikzlibrary{arrows.meta}
\usepackage{xcolor,paralist,hyperref,titlesec,fancyhdr,etoolbox}
\usepackage{orcidlink}

\providecommand{\doi}[1]{\href{https://doi.org/#1}{doi:#1}}

\newtheorem{thm}{Theorem}[section]

\newtheorem{prop}{Proposition}[section]

\theoremstyle{definition}
\newtheorem{defi}{Definition}[section]

\theoremstyle{remark}
\newtheorem{rem}{Remark}[section]

\makeatletter
\providecommand{\@secnumpunct}{.}   % restored: titlesec bypasses the amsart definition
\makeatother

\titleformat{\section}{\normalfont\large\bfseries\centering}{\thesection.}{0.5em}{}
\titlespacing*{\section}{0pt}{2.6ex plus 1ex minus .2ex}{1.6ex plus .2ex}

\hypersetup{ colorlinks=true, linkcolor=black, filecolor=black, urlcolor=black,
             citecolor=black }

\begin{document}

\title[Human and AI-generated texts between modal logic and statistics]{Human and AI-generated texts between modal logic and statistics}

\author[S.\ Cuconato]{Simone Cuconato\,\orcidlink{0000-0003-0277-9575}}
\address{Department of Physics, University of Calabria, Italy}
\email{simone.cuconato@unical.it}

\author[D.\ Ferrari]{Donato Ferrari\,\orcidlink{0009-0006-6583-1341}}
\address{Department of Business and Legal Sciences, University of Calabria, Italy}
\email{donato.ferrari@unical.it}

\subjclass[2020]{03B45; 03F07; 62F03; 68T50}

\keywords{AI; modal logic; labelled sequent calculus; sequent-style tableaux;
text statistics}

\date{}

\begin{abstract}
We read the geometry of semantic neighbourhood graphs as modal logic and give that reading a statistical form, in order to make precise the structural difference between human and machine-generated text. Texts are the worlds of a finite frame whose accessibility is the $k$-nearest-neighbour relation of a transformer embedding, and the symmetry, transitivity, Euclideanity and seriality frequencies of the two subcorpora are shown to be \emph{degrees of validation} of the modal axioms $\mathsf{B},\mathsf{4},\mathsf{5},\mathsf{D}$. Each degree is at once the proportion of instances of a rule that the subframe licenses in Negri's labelled calculus $\mathsf{G3.K}$ and a plug-in estimate of a population probability. A prompt-balanced comparison finds consistently higher artificial degrees for $\mathsf{4}$ and $\mathsf{5}$. We add a degree of groundedness and of situatedness, and recast the licensing reading in Cuconato's one-sided sequent-style tableaux, where each degree becomes a rate of set membership.
\end{abstract}

\maketitle

\section{Introduction}\label{sec:intro}

Among the digital instruments now in routine scholarly use, artificial intelligence \cite{ref_scarcello} occupies a peculiar position \cite{cuconatoscarcellobeneduci}. It analyses large corpora, recognises stylistic regularities through stylometry, and reads the affective layering of a text through sentiment analysis; in doing so it supports the digital humanities \cite{ref_cuconato2024fol,ref_cuconato2026soundness} while quietly unsettling the assumptions on which they rest -- the nature of a text, and the conditions of its intelligibility \cite{ref_cuconato2024modal}. The present study grows from a deliberately simple question: can \emph{perceived space} serve as a criterion for telling human writing apart from synthetic writing? Posed in this form the question is no longer merely linguistic but geographical, where geography is understood not as the inventory of physical locations but as the study of the ways in which human beings perceive, organise, remember, and narrate space. So construed, geography is not a frame imposed on textual analysis from without; it is the vantage from which the relation between language, experience, and situated knowledge first becomes visible.

The encounter between geography and computation is not new \cite{ref_torrens2018artificial}. Computational tools entered geographical research first as a means of managing large environmental datasets and then as instruments for modelling and visualising spatial phenomena, until artificial intelligence ceased to be an external device and became part of the very infrastructure through which several disciplines observe, classify, and describe the world \cite{ref_zhao2021deep}. Its growing fluency, however, makes its limits more conspicuous, not less. A model writes coherent prose and will describe a journey, a city, a childhood memory, a coastline with unimpeachable syntax; what remains uncertain is whether such descriptions carry the epistemic depth of those produced by a subject who has actually lived, perceived, and remembered the world from within it. This is the crux of the debate over meaning and form: fluency in the manipulation of form is not, by itself, access to the situated meaning that form conveys in human hands. Human beings do not merely report events -- they place them; they remember from somewhere and speak from somewhere, organising meaning through the implicit geography of lived experience \cite{ref_tuan1977space}, a conviction that animates the recent turn to narrative method in geographical research.

The corpus behind this paper was designed to bring that difference to the surface \cite{ref_depascale2026geospatial}. A set of Italian texts written by human participants is compared with synthetic texts generated from the same thematic nuclei: the human narratives were gathered through a questionnaire that asked participants to recount a personally significant event without ever requiring them to name a place, broad thematic cues were then extracted, and a chatbot was prompted with the same cues to produce comparable artificial texts at a fixed ratio of one human sample to three generated ones \cite{ref_cuconatoferrari}. The design lets one ask not whether the two kinds of writing differ in style -- they do, trivially -- but whether they differ in the way they \emph{build a world}: in how references to place interact with the verbs of perception, movement, and memory. The working hypothesis is that human writing retains a geographical density irreducible to linguistic probability, and it is best stated in the idiom of humanistic geography, for which place is never a neutral coordinate but space transformed by memory, perception, and meaning \cite{ref_levy1999tournant}.

This qualitative thesis has since been given quantitative body. Representing the corpus as a graph in which two texts are joined when their embeddings are near, one can measure how the resulting semantic space is locally organised, and find that the human and artificial subcorpora organise it differently \cite{ref_cuconato2025situation}. The difficulty lies not with the data but with the language used to read them. That language speaks of accessibility, of neighbourhoods that are or are not closed, of interiors and boundaries -- the vocabulary of modal logic and of the finite topology it induces -- and yet the connection is left at the level of metaphor: the frequencies of symmetry or transitivity in a neighbourhood graph are reported as geometric facts, and the modal words are borrowed to gloss them. Our aim is to remove the metaphor and put a calculus where it stood, and then to put a statistic where the calculus stands. We argue that those frequencies are \emph{degrees of validation} of the normal modal axioms; that, read proof-theoretically, each is the fraction of the applications of a structural rule that the corpus is entitled to make; and that, read statistically, each is an estimator of a population parameter, so that the separation between the two corpora becomes a hypothesis to be tested rather than a contrast to be admired.

The instrument for the first reading is the proof theory of modal logic in the labelled style of Negri \cite{ref_negri2005}, as systematised in Negri and von Plato \cite{ref_negrivonplato2011}. Its decisive feature is that frame conditions are not external constraints imposed on a semantics but \emph{rules} of a sequent calculus, added to a common core $\mathsf{G3.K}$ without disturbing its structural behaviour -- the admissibility of contraction and of cut, and the invertibility of the rules. Once symmetry, transitivity, Euclideanity and seriality are written as the rules $\mathrm{Sym},\mathrm{Trans},\mathrm{Eucl},\mathrm{Ser}$, the empirical question ``how symmetric is the human subframe?'' acquires a proof-theoretic translation: ``what proportion of the instances of $\mathrm{Sym}$ does the human subframe license?'' The instrument for the second reading is the elementary statistical observation that a ratio of successes to eligible instances estimates a population validation probability. Its force here is that it turns the proof-theoretic licensing density into a quantity whose empirical uncertainty can be evaluated through dependence-aware resampling while respecting the graph structure induced by semantic neighbourhoods.

The paper is organised so as to make this double passage visible step by step. Section~\ref{sec:background} recalls the situation-theoretic background and the semantic role of reference, and equips the situated consequence with a \emph{degree of groundedness} and a \emph{degree of situatedness}. Section~\ref{sec:empirical} fixes the empirical setting, reproduces the relational frequencies, and records the global geometric statistics from which the frame is built. Section~\ref{sec:semantics} gives the modal semantics of the frames, shows that the situation-based system is, as ordinarily stated, inert, recalls the correspondence between the five axioms and their frame conditions, and identifies the boundary structure of the frame with the Shannon entropy of the class label across a neighbourhood. Section~\ref{sec:degrees} is the conceptual core: it defines the degree of validation, identifies it with the measured frequencies, reads it both as a licensing density and as an empirical plug-in estimator, and develops a dependence-aware comparison between the human and artificial corpora through prompt-balanced resampling. Section~\ref{sec:calculus} presents $\mathsf{G3.K}$ in Negri's style, establishes the structural properties that make the licensing reading legitimate, shows on the derivation of factivity how a formal proof inherits its empirical weight from the frequencies, and recasts that reading in the one-sided sequent-style tableaux of Cuconato, where a licensed step is confirmed by a single test of set membership. A discussion of scope and limits closes the paper.

\section{The situation-theoretic background}\label{sec:background}

Writing is never a merely formal arrangement of linguistic units, and nowhere is this assumption more consequential than in the comparison between human and synthetic text. Its pivot is the distinction between space and place. Space is abstract extension, a field of possible positions and relations; place is space transformed by meaning --- not only where something happens but where it becomes intelligible to a subject. A house, a street, a coastline, a room acquires narrative force because it is bound to memory, affection, loss, or expectation, and in human writing such references seldom serve as mere background: they act as cognitive and emotional anchors that let the narrated event take shape within a recognisable world. This is the lesson of humanistic geography, which treats place not as a neutral coordinate but as a lived and interpreted dimension of existence \cite{ref_tuan1977space}.

What we shall call \emph{spatial anchoring} is the capacity of a text to bind an event to a meaningful place and, through that binding, to produce a denser intelligibility; in human narratives place often works as a threshold between outer reality and inner life, explicit in the naming of a city, implicit in the description of an interior, a path, a distance. Synthetic narratives challenge this model precisely because they reproduce its surface so faithfully --- the vocabulary of memory, the syntax of emotion, the conventional imagery of place --- while the function of those elements changes, for the generated text issues not from a subject who remembers and inhabits but from a system that computes linguistic probability. Spatiality, there, is simulated rather than lived, assembled rather than recalled, and plausibility is not situatedness. To render this difference tractable rather than evocative we give it a formal carrier, the notion of situation space, in which meaning is anchored in structured configurations of experience rather than in isolated verbal units \cite{ref_giordani2024situation}.

A sentence, on this view, is not merely true or false at a world; it is \emph{about} something, and what it is about is a situation, a delimited portion of the world to which the sentence points and of which it speaks \cite{ref_fine2017theoryI,ref_fine2017theoryII}. Following Giordani \cite{ref_giordani2024situation}, we take the situations available at a world to be organised algebraically \cite{ref_barwise1989situation}.

\begin{defi}\label{def:sitspace}
A \emph{situation space} is a join-semilattice with greatest element, that is, a structure $(\mathit{Sit},1,\sqcup)$ in which $\sqcup$ is idempotent, commutative and associative, the induced order is $s\le s'$ iff $s\sqcup s'=s'$, and $1$ satisfies $s\sqcup 1=1$ for every $s$.
\end{defi}

The order records mereological inclusion of situations, the join is the smallest situation containing two given ones, and the top $1$ is the undifferentiated total situation. The reading we shall exploit is elementary but consequential: a text that anchors its assertions to specific, distinct places populates the lower, articulated part of the lattice, while a text that asserts without locating drifts towards the top, where all distinctions are washed out. The reference function makes this precise. Over a propositional language $\mathcal{L}$ with atoms $\mathcal{P}$, built by the classical connectives together with a constant $\top$, one assigns to each formula, at each world, the situation it concerns.

\begin{defi}\label{def:reference}
Let $D$ be a non-empty set of worlds. A \emph{situation frame} is a triple $(D,R,S)$ with $R:D\to\wp(D)$ an accessibility assignment, not required to be reflexive, and $S$ assigning a situation space $S(d)$ to each world. A \emph{reference function} $\sigma$ assigns to each formula at each world a situation, with $\sigma_d(p)\in\mathit{Sit}_d$ for atoms, $\sigma_d(\phi)=1_d$ whenever $\top$ occurs in $\phi$, and otherwise $\sigma_d(\phi)=\bigsqcup\{\sigma_d(p):p\in\mathcal{P}(\phi)\}$, the join of the referents of the atoms occurring in $\phi$.
\end{defi}

Reflexivity is deliberately omitted from the definition: neither the reference function nor the situated consequence introduced below makes any use of a self-accessibility $d\in R(d)$, and its omission keeps the situation frame in step with the self-loop-free $k$-nearest-neighbour relation that realises accessibility in Section~\ref{sec:empirical}. Truth at a world is classical --- $\top$ holds everywhere, an atom holds where the valuation $V$ makes it true, and the connectives behave Booleanly --- and ordinary consequence $\Delta\Vdash\phi$ is truth preservation at every world of every model. What the reference function adds is a second, finer relation, in which a conclusion may follow from premises only if it does not stray beyond what the premises are about.

\begin{defi}\label{def:situated}
The \emph{situated consequence} $\Delta\Vdash_\sigma\phi$ holds when $\Delta\Vdash\phi$ and, in every model and at every world $d$, $\sigma_d(\phi)\le\bigsqcup_{\delta\in\Delta}\sigma_d(\delta)$.
\end{defi}

The two relations come apart exactly where aboutness does work. One has $p\Vdash p\vee q$ unconditionally, yet $p\Vdash_\sigma p\vee q$ fails as soon as $\sigma_d(q)\not\le\sigma_d(p)$, since by Definition~\ref{def:reference} the disjunction inherits the referent $\sigma_d(p)\sqcup\sigma_d(q)$, which reaches into a region of the world that the premise $p$ never mentions. This is the formal home of the human/artificial contrast, and it turns on a single feature of the reference function at a world, namely whether any atom is anchored there at all. Assume the atom set $\mathcal{P}$ finite, as it is for any fixed vocabulary of markers, and write $\mathcal{P}_d=\{p\in\mathcal{P}:\sigma_d(p)\neq 1_d\}$ for the atoms with a proper referent at $d$. Call $d$ \emph{placeless} when $\mathcal{P}_d=\varnothing$ and \emph{place-grounded} otherwise.

It is tempting to say that the situated relation collapses onto ordinary consequence as soon as a world ceases to separate its markers, but the collapse is governed by placelessness alone, and the following proposition draws the boundary exactly.

\begin{prop}\label{prop:collapse}
At a world $d$ the side condition of Definition~\ref{def:situated} holds for every non-empty $\Delta$ and every $\phi$ with $\Delta\Vdash\phi$ --- so that $\Vdash_\sigma$ imposes nothing beyond $\Vdash$ at $d$ --- if and only if $d$ is placeless.
\end{prop}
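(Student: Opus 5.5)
The plan is to read the side condition at the fixed world $d$ and for the fixed reference function $\sigma$ of the model at hand, and to prove the two directions separately. Both directions rest on two order facts in the situation space $S(d)$ of Definition~\ref{def:sitspace}. First, $s\le 1_d$ for every $s$, because $s\sqcup 1_d=1_d$. Second, $1_d\le s$ holds only when $s=1_d$, because $1_d\le s$ means $1_d\sqcup s=s$, and the left-hand side equals $1_d$.

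For the direction ``placeless implies no restriction'', I will first show by cases on Definition~\ref{def:reference} that at a placeless world every formula refers to $1_d$.
\begin{itemize}
\item If $\top$ occurs in $\phi$, then $\sigma_d(\phi)=1_d$ by definition.
\item Otherwise $\phi$ is built from atoms alone, so $\mathcal{P}(\phi)\neq\varnothing$. Then $\sigma_d(\phi)$ is a join of the values $\sigma_d(p)$, each of which is $1_d$ by placelessness, and idempotence gives $\sigma_d(\phi)=1_d$.
\end{itemize}
Now take a non-empty $\Delta$ with $\Delta\Vdash\phi$. The join $\bigsqcup_{\delta\in\Delta}\sigma_d(\delta)$ is a non-empty join of copies of $1_d$, so it equals $1_d$, and the side condition $1_d\le 1_d$ holds trivially. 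Non-emptiness is essential here. An empty join need not exist in a semilattice without a bottom, and even if it did, it would not lie above $1_d$. This explains the restriction to non-empty $\Delta$ in the statement.

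For the converse I will argue by contraposition. Suppose $d$ is place-grounded, and pick $p\in\mathcal{P}_d$, so $\sigma_d(p)\neq 1_d$. Take $\Delta=\{p\}$ and $\phi=\top$. Then $p\Vdash\top$, since $\top$ holds at every world. By definition $\sigma_d(\top)=1_d$, and by the second order fact above, $1_d\le\sigma_d(p)$ would force $\sigma_d(p)=1_d$, which is false. So the side condition fails at $d$ for a valid consequence with a non-empty premise set.

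If one wants a witness that mentions $p$, the disjunction $p\vee\top$ works the same way, since it also contains $\top$. This mirrors the $p\vee q$ example given before the proposition.

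The only delicate point, rather than a real obstacle, is the case analysis in the first direction. It must cover formulas with no atoms at all, and this is handled because the only atom-free formulas in $\mathcal{L}$ contain $\top$. The other point is to state clearly that the quantifier ``in every model'' of Definition~\ref{def:situated} is being specialised to the given model at the world $d$.
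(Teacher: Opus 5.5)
Your proof is correct and follows the paper's argument. In one direction, placelessness makes every referent equal to $1_d$, so any non-empty join of premise referents is $1_d$. In the other, for a place-grounded $d$ the witness $\Delta=\{p\}$, $\phi=\top$ breaks the side condition, since $1_d\le\sigma_d(p)$ would force $\sigma_d(p)=1_d$. Your explicit handling of atom-free formulas and non-empty joins is a refinement the paper leaves implicit. One small correction to your aside: an empty join, where it exists, does lie above $1_d$ in the degenerate one-element situation space.
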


\textbf{Proof.} Suppose $\mathcal{P}_d=\varnothing$. Then $\sigma_d(p)=1_d$ for every atom, and since $\sigma_d(\top)=1_d$ and the referent of a compound formula is the join of the referents of the atoms occurring in it, $\sigma_d(\phi)=1_d$ for every $\phi$; for non-empty $\Delta$ one has $\bigsqcup_{\delta\in\Delta}\sigma_d(\delta)=1_d\ge\sigma_d(\phi)$, and the side condition holds. Suppose instead $\mathcal{P}_d\neq\varnothing$, and fix $p\in\mathcal{P}_d$, so that $\sigma_d(p)\neq 1_d$. Taking $\Delta=\{p\}$ and $\phi=\top$ gives $p\Vdash\top$, whereas $\sigma_d(\top)=1_d\not\le\sigma_d(p)$, since $1_d\le\sigma_d(p)$ would force $\sigma_d(p)=1_d$ against the choice of $p$; the side condition fails. $\blacksquare$

Proposition~\ref{prop:collapse} fixes the exact threshold at which aboutness stops constraining inference: $\Vdash_\sigma$ coincides with $\Vdash$ at $d$ precisely when $d$ is placeless, and stays strictly finer at every place-grounded world. The geographical thesis --- that the artificial text writes from nowhere --- is, in this language, the claim that its models concentrate on placeless worlds, where the situational apparatus is inert.

The dichotomy is bivalent, but the data are continuous, and the apparatus should be too. We attach to each world two degrees, in exact parallel with the degrees of validation of Section~\ref{sec:degrees}: the first records \emph{whether} a world is grounded, the second \emph{how dispersed} its grounding is.

\begin{defi}\label{def:situatedness}
Fix a model and a world $d$. The \emph{degree of groundedness} at $d$ is $g(d)=|\mathcal{P}_d|/|\mathcal{P}|$, the fraction of atoms with a proper referent. Calling an unordered pair $\{p,q\}\subseteq\mathcal{P}_d$ \emph{separated at $d$} when $\sigma_d(p)$ and $\sigma_d(q)$ are $\le$-incomparable, the \emph{degree of situatedness} at $d$ is
\[
\mathrm{sit}(d)=\frac{\big|\{\{p,q\}\subseteq\mathcal{P}_d:\ \sigma_d(p)\not\le\sigma_d(q)\ \text{and}\ \sigma_d(q)\not\le\sigma_d(p)\}\big|}{\binom{|\mathcal{P}_d|}{2}},
\]
with $\mathrm{sit}(d)=0$ when fewer than two atoms have proper referents.
\end{defi}

By Proposition~\ref{prop:collapse} the collapse of $\Vdash_\sigma$ onto $\Vdash$ at $d$ is governed by $g$ alone, occurring exactly when $g(d)=0$. The index $\mathrm{sit}(d)$ measures a finer quantity, the dispersion of the proper referents, and its vanishing is necessary but not sufficient for the collapse: $g(d)=0$ forces $\mathrm{sit}(d)=0$, but not conversely. The two indices part company on \emph{nested} anchoring. A text whose markers point to a $\le$-chain of proper situations --- a room within a house within a city --- has $g(d)>0$ and yet $\mathrm{sit}(d)=0$, since no two of its referents are incomparable; there $\Vdash_\sigma$ is still strictly finer than $\Vdash$, a premise about the room failing to reach the situation of the city that contains it. A placeless world has $g(d)=\mathrm{sit}(d)=0$; a world whose markers point to pairwise-incomparable proper situations has $g(d)>0$ and $\mathrm{sit}(d)=1$. The geographical claim thus resolves into two measurable components, groundedness and dispersion, both of which stand on the same footing as the relational closure to which we now turn.

\section{The empirical setting}\label{sec:empirical}

We fix the bridge between this semantics and measurement. The original
corpus comprised $64$ human-authored Italian texts and $192$ texts generated
by a large language model (GPT-3.5) from thematic cues extracted from the
human responses, at a planned ratio of one human text to three generated
texts. Nine generated texts, corresponding to three prompts, were unavailable.
To preserve the prompt structure of the study, the empirical analysis was
therefore restricted to the $61$ complete prompt clusters. The resulting
analytical sample contains $244$ texts: $61$ human texts and $183$
AI-generated texts, with three generated responses associated with each
retained human prompt.

Each text $w_i$ is represented by an embedding
$\mathbf{x}_i\in\mathbb{R}^{768}$ obtained using the multilingual sentence
transformer
\texttt{sentence-transformers/paraphrase-
multilingual-mpnet-base-v2}.
For a fixed neighbourhood size $k$, cosine distance induces the accessibility
relation
\[
w_iR_kw_j
\quad\text{iff}\quad
w_j \text{ is among the $k$ nearest neighbours of } w_i,
\]
where the trivial self-neighbour is excluded. Unless otherwise stated, the
baseline specification uses $k=10$.

The pair $\mathcal{F}_k=(W,R_k)$ is a finite frame realising, concretely,
the accessibility reduct of a situation frame: the abstract portion of the
world accessible from $d$ becomes the set of texts semantically nearest to a
given one. A valuation separating human from artificial texts turns
$\mathcal{F}_k$ into a model. We first report the relational properties of
the two induced subframes formed by the $61$ human and $183$ artificial
texts. Because these subframes have different numbers of vertices, the main
comparative analysis subsequently uses a prompt-balanced resampling design
with equal-sized human and artificial graphs.

The local organisation of $R_k$ is summarised by the frequencies with which it satisfies the standard relational properties. Table~\ref{tab:frame} reports them for the two subcorpora; these are the figures on which the entire argument turns.

\begin{table}[htbp]
\centering
\caption{Relational frequencies in the complete-prompt human and artificial
subframes, using $k=10$ and cosine distance.}
\label{tab:frame}
\begin{tabular}{lccc}
\toprule
Property & Human $(n=61)$ & AI-generated $(n=183)$ & Difference \\
\midrule
Symmetry       & 0.603 & 0.613 & 0.010 \\
Transitivity   & 0.372 & 0.487 & 0.115 \\
Euclideanity   & 0.413 & 0.541 & 0.128 \\
Seriality      & 1.000 & 1.000 & 0.000 \\
\bottomrule
\end{tabular}
\end{table}

At the descriptive level, the artificial subframe exhibits higher
transitivity and Euclideanity than the human subframe. The corresponding
differences are $0.115$ and $0.128$, respectively. The difference in symmetry
is much smaller, equal to $0.010$, while seriality is complete in both
subframes because every vertex has $k$ outgoing neighbours by construction.
These comparisons must nevertheless be interpreted cautiously because the
human and artificial graphs contain different numbers of vertices. The
dependence-aware and graph-size-balanced comparison is therefore reported
below. The relational frequencies sit on top of a layer of global geometric statistics, recorded in Table~\ref{tab:global}, which describe the embedding configuration before any accessibility relation is imposed: cluster separation, the fidelity with which the nearest-neighbour structure of the ambient space is preserved on the subcorpus, the mean within-class cosine similarity, and the variance captured by the leading principal components. These are the output of the standard apparatus for reading high-dimensional semantic geometry --- principal-component and spectral clustering and the manifold-learning family of nonlinear projections --- and they belong to the metric, not the deductive, life of the corpus. Only the relational frequencies of Table~\ref{tab:frame} and the boundary structure of Table~\ref{tab:boundary} are absorbed by the logic.

\begin{table}[htbp]
\centering
\begin{tabular}{|l|c|c|}
\hline
Descriptor & Human & AI-generated \\
\hline
Silhouette (cluster coherence) & 0.62 & 0.41 \\
Neighbourhood preservation & 0.85 & 0.72 \\
Intra-cluster cosine similarity & 0.78 & 0.61 \\
Variance in leading PCA components & 0.67 & 0.48 \\
\hline
\end{tabular}
\caption{Global geometric descriptors of the embedding configuration.}
\label{tab:global}
\end{table}

The interior and boundary structure of the two subframes --- the proportion of texts all of whose neighbours share their class, against those whose neighbourhood is mixed --- is given in Table~\ref{tab:boundary} together with the local class heterogeneity, whose modal reading is established in Section~\ref{sec:semantics}.

\begin{table}[htbp]
\centering
\begin{tabular}{|l|c|c|}
\hline
Property & Human & AI-generated \\
\hline
Boundary rate & 0.922 & 0.623 \\
Interior rate & 0.016 & 0.377 \\
Local modal entropy & 0.783 & 0.377 \\
\hline
\end{tabular}
\caption{Interior, boundary and local heterogeneity of the subframes.}
\label{tab:boundary}
\end{table}

The global accessibility profile --- closeness centrality $0.235$ against $0.258$, betweenness centrality $0.0118$ against $0.0062$ --- belongs to the graph-theoretic rather than the deductive register, and we retain it only as a reminder that the logic recovers one face of the data and not all of it.

\section{Modal semantics of the frames}\label{sec:semantics}

To speak of validation we must first have something to validate. The language is extended by the necessity operator \cite{ref_poggiolesi2011}, with $\Box\phi$ true at a world when $\phi$ holds throughout the accessible neighbourhood and $\Diamond\phi:=\neg\Box\neg\phi$ its dual. The topological vocabulary of the empirical analysis is then merely the standard translation at work \cite{ref_blackburn2001}: with $\alpha$ the atom whose extension is a class $A$ of texts, the interior of $A$ is the extension of $\Box\alpha$ and its boundary the extension of $\Diamond\alpha\wedge\Diamond\neg\alpha$, so that the rates of Table~\ref{tab:boundary} are frequencies of definable modal conditions.

That observation can be sharpened until the third row of Table~\ref{tab:boundary}, the local modal entropy, acquires the same status. For a world $w$ with neighbourhood $R_k(w)$ of size $k$, write $p_w=|R_k(w)\cap A|/k$ for the fraction of its neighbours lying in the class $A$, and let $H(p)=-p\log_2 p-(1-p)\log_2(1-p)$ be the binary Shannon entropy of the class label across the neighbourhood \cite{ref_covertthomas2006}.

\begin{prop}\label{prop:entropy}
At every world $w$,
\[
w\models\Box\alpha\vee\Box\neg\alpha \iff p_w\in\{0,1\} \iff H(p_w)=0,
\]
\[
w\models\Diamond\alpha\wedge\Diamond\neg\alpha \iff 0<p_w<1 \iff H(p_w)>0.
\]
Consequently the interior rate of Table~\ref{tab:boundary} is the frequency of $\{H=0\}$, the boundary rate the frequency of $\{H>0\}$, and the local modal entropy is the average over $W$ of a functional supported exactly on the modal boundary.
\end{prop}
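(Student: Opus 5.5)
The plan is to unfold the Kripke clauses for $\Box$ and $\Diamond$ on the finite frame $\mathcal{F}_k$. Then the entropy claim reduces to the elementary fact that the binary entropy vanishes exactly at the endpoints. Throughout, $R_k(w)$ has exactly $k\ge 1$ elements: the self-loop is excluded, and every text has $k$ outgoing neighbours by construction, as in the seriality row of Table~\ref{tab:frame}. Hence $p_w=|R_k(w)\cap A|/k$ is well defined. The valuation makes $\alpha$ true exactly on $A$.

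\textbf{First chain.} By the truth clause, $w\models\Box\alpha$ iff $R_k(w)\subseteq A$. Since $|R_k(w)|=k$, this holds iff $|R_k(w)\cap A|=k$, that is, iff $p_w=1$. Likewise $w\models\Box\neg\alpha$ iff $R_k(w)\cap A=\varnothing$, iff $p_w=0$. Taking the disjunction gives the first equivalence.

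For the second equivalence I use the convention $0\log_2 0=0$, under which $H(0)=H(1)=0$. For $0<p<1$ both summands $-p\log_2 p$ and $-(1-p)\log_2(1-p)$ are strictly positive, since each logarithm is negative. So $H(p)=0$ iff $p\in\{0,1\}$.

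\textbf{Second chain.} By duality, $\Diamond\alpha\wedge\Diamond\neg\alpha$ is equivalent to $\neg(\Box\neg\alpha\vee\Box\alpha)$. So the second chain is the negation of the first, using $H\ge 0$ and the fact that $p_w$ lies in $[0,1]$.

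\textbf{Consequences for Table~\ref{tab:boundary}.} As recalled before the statement, the interior and boundary rates are the frequencies over the subframe's worlds of the definable conditions $\Box\alpha$ and $\Diamond\alpha\wedge\Diamond\neg\alpha$. These are the frequencies of $\{H=0\}$ and $\{H>0\}$ only when the relevant interior condition is read as $\Box\alpha\vee\Box\neg\alpha$, the homogeneous-neighbourhood condition. The local modal entropy is $\frac1{|W|}\sum_w H(p_w)$. Its summand vanishes off the boundary by the second chain, so it is supported exactly on the modal boundary.

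\textbf{Main obstacle.} The logic is routine; the bookkeeping is where care is needed, in two places.

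First, the description of Table~\ref{tab:boundary} says ``all of whose neighbours share their class''. That is $\Box\alpha$ at $\alpha$-worlds and $\Box\neg\alpha$ at $\neg\alpha$-worlds, which is always a disjunct of $\Box\alpha\vee\Box\neg\alpha$. I would check that the two readings agree, so that interior and boundary partition $W$. The reported rates $0.016+0.922\neq 1$ suggest they do not quite partition the human subframe. I would therefore state the consequence for the rates defined via $\{H=0\}$ and $\{H>0\}$, and flag that the tabulated figures must be computed on that definition.

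Second, the claim requires $k\ge1$, so that $\Box$ is not vacuously true on an empty neighbourhood, which would break $p_w\in\{0,1\}$. This is guaranteed by seriality of $R_k$.
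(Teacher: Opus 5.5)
Your proof is correct and follows the paper's route: unfold the truth clauses for $\Box\alpha$ and $\Box\neg\alpha$ over $R_k(w)$, then use that $H$ vanishes exactly at $0$ and $1$. The differences are small: you obtain the second chain by duality where the paper unfolds $\Diamond\alpha$ iff $p_w>0$ and $\Diamond\neg\alpha$ iff $p_w<1$ directly, and you make explicit the hypotheses $|R_k(w)|=k\ge 1$ and $0\log_2 0=0$ that the paper leaves tacit.

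Your caveat about Table~\ref{tab:boundary} is also well founded, and the paper's proof does not address it, since it passes over the frequency statements as ``the corresponding counts over $W$''. The tabulated interior condition (all neighbours share the text's own class, or $\Box\alpha$ in the gloss before the proposition) is strictly stronger than $H(p_w)=0$. The human column, with $0.016+0.922=0.938$, shows roughly $6\%$ of human texts that have $H=0$ yet are not interior. So the identification of the interior rate with the frequency of $\{H=0\}$ holds only under your homogeneous-neighbourhood reading $\Box\alpha\vee\Box\neg\alpha$.
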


\textbf{Proof.} $\Box\alpha$ holds at $w$ iff every $R_k$-neighbour of $w$ lies in $A$, i.e. $p_w=1$; dually $\Box\neg\alpha$ iff $p_w=0$, so $\Box\alpha\vee\Box\neg\alpha$ iff $p_w\in\{0,1\}$. Symmetrically $\Diamond\alpha$ iff $p_w>0$ and $\Diamond\neg\alpha$ iff $p_w<1$, so the conjunction holds iff $0<p_w<1$. Since $H$ vanishes precisely at $0$ and $1$ and is positive between them, the entropy equivalences follow, and the frequency statements are the corresponding counts over $W$. $\blacksquare$

Proposition~\ref{prop:entropy} makes the information-theoretic descriptor a modal one: the information carried by a world's neighbourhood is literally the modal indeterminacy of its class, vanishing on the interior and maximal where the two classes meet in equal measure. The human frame, with a boundary rate of $0.922$ and a mean entropy of $0.783$, is almost everywhere indeterminate in this sense; the artificial frame, interior on more than a third of its worlds and with mean entropy $0.377$, is correspondingly settled. This is the same closure that Table~\ref{tab:frame} measures, seen now through the class label rather than the relational atoms.

It is worth pausing on what the modal layer adds to the situation-based system, because at first it adds nothing. The truth clauses of Definition~\ref{def:reference} are classical and pointwise; neither the accessibility $R$, nor the assignment $S$, nor the reference function $\sigma$ enters them.

\begin{prop}\label{prop:inert}
On situation models, $\Delta\Vdash\phi$ holds if and only if $\Delta$ classically entails $\phi$. In particular ordinary consequence is blind to $R$, $S$ and $\sigma$.
\end{prop}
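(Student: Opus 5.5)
The plan is to prove the two directions separately, using only the fact recorded just before the statement: on situation models, truth at a world is classical and pointwise, and none of $R$, $S$, $\sigma$ occurs in the truth clauses. The key observation is that every situation model $(D,R,S,\sigma,V)$ induces, at each world $d\in D$, a classical valuation $v_d:\mathcal{P}\to\{0,1\}$ given by $v_d(p)=1$ iff $d\in V(p)$. A routine induction on $\phi$ then shows that $d\models\phi$ iff $v_d(\phi)=1$. The base cases are $\top$ and the atoms, and the inductive steps are the Boolean connectives, each of which is evaluated at $d$ alone. Since no modal operator belongs to $\mathcal{L}$, no clause ever consults $R(d)$, $S(d)$ or $\sigma_d$, so the induction is immediate.

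For the direction from classical entailment to $\Vdash$, suppose $\Delta$ classically entails $\phi$. Take any situation model and any world $d$ at which every $\delta\in\Delta$ holds. By the lemma, $v_d(\delta)=1$ for all $\delta\in\Delta$. Classical entailment then gives $v_d(\phi)=1$, and the lemma again gives $d\models\phi$.

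For the converse, suppose $\Delta\Vdash\phi$ and let $v$ be an arbitrary classical valuation satisfying $\Delta$. I will build a one-world situation model realising $v$, choosing the pieces as follows:
\begin{itemize}
\item the world set is $D=\{d\}$;
\item accessibility is $R(d)=\varnothing$, which is allowed since reflexivity is not required;
\item the situation space is the trivial semilattice $S(d)=\{1_d\}$;
\item the reference function is constantly $\sigma_d\equiv 1_d$, which satisfies Definition~\ref{def:reference} trivially;
\item the valuation is $V(p)=\{d\}$ iff $v(p)=1$.
\end{itemize}
By the lemma, $d$ satisfies $\Delta$. Then $\Delta\Vdash\phi$ yields $d\models\phi$, hence $v(\phi)=1$. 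So $\Delta$ classically entails $\phi$.

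The ``in particular'' clause follows from the proof itself. The lemma shows that the truth set of each formula at $d$ depends only on $V$ at $d$, so any two models differing only in $R$, $S$ or $\sigma$ validate exactly the same consequences.

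The only genuine obstacle is checking that the one-world countermodel is a legitimate situation frame. This requires three things: that Definition~\ref{def:sitspace} admits the singleton semilattice (it does, since $1\sqcup 1=1$ satisfies all the axioms); that $\sigma$ is well defined on it; and that the accessibility assignment may be empty. If an empty $R(d)$ were thought objectionable, I would instead take $R(d)=\{d\}$, which is harmless because $R$ plays no role in the truth clauses.
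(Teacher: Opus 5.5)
Your proof is correct and follows the same route as the paper. Truth at a world is computed by the propositional valuation that the world induces, which gives classical entailment $\Rightarrow\ \Vdash$; conversely, any propositional valuation is realised at a single world, which gives the other direction. Your check that the one-world frame, with trivial semilattice and constant reference function $1_d$, is a legitimate situation model makes explicit what the paper leaves implicit.
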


\textbf{Proof.} The truth value of a formula at a world depends only on the Boolean values of its atoms there, so it is computed by the propositional valuation $d$ induces. Classical entailment therefore suffices for $\Vdash$, and conversely a propositional countermodel is realised at a single world carrying the offending valuation. $\blacksquare$

Proposition~\ref{prop:inert} is not an objection but a diagnosis: it isolates exactly what one must switch on for the situational apparatus to bear on inference. The reference function is switched on by the situated relation $\Vdash_\sigma$ of Definition~\ref{def:situated}, and graded by $\mathrm{sit}$ of Definition~\ref{def:situatedness}; the accessibility relation is switched on by the modal operator, and here the gain is genuine, for $\Box$ quantifies over $R_k$ and lets the geometry of the neighbourhood graph speak. What it says is governed by the classical correspondence between modal axioms and frame conditions.

\begin{thm}\label{thm:corr}
Over the class of frames, each of the following axioms is valid exactly on the frames whose accessibility relation has the stated property: $\mathsf{T}\;(\Box A\to A)$ on the reflexive frames; $\mathsf{B}\;(A\to\Box\Diamond A)$ on the symmetric; $\mathsf{4}\;(\Box A\to\Box\Box A)$ on the transitive; $\mathsf{5}\;(\Diamond A\to\Box\Diamond A)$ on the Euclidean; and $\mathsf{D}\;(\Box A\to\Diamond A)$ on the serial.
\end{thm}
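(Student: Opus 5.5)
The plan is to prove each of the five correspondences by the same two-direction scheme, treating $A$ as a propositional variable and validity on a frame as truth at every world under every valuation. In each case the forward direction (frame property implies validity) is a direct unfolding of the truth clauses for $\Box$ and $\Diamond$. The converse direction (validity implies frame property) is proved by contraposition: from a failure of the property at specific worlds we build a single \emph{minimal valuation} that falsifies the axiom at one world.

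For the soundness halves I would argue pointwise. For $\mathsf{T}$, if $\Box A$ holds at $w$ and $wRw$, then $A$ holds at $w$. For $\mathsf{B}$, given $A$ at $w$ and any $v$ with $wRv$, symmetry yields $vRw$, so $\Diamond A$ holds at $v$. For $\mathsf{4}$, given $\Box A$ at $w$ and $wRv$, $vRu$, transitivity gives $wRu$, hence $A$ at $u$. For $\mathsf{5}$, a witness $u$ with $wRu$ and $A$ at $u$, together with any $v$ such that $wRv$, gives $vRu$ by Euclideanity, so $\Diamond A$ holds at $v$. For $\mathsf{D}$, seriality supplies some $v$ with $wRv$; $\Box A$ makes $A$ true there, hence $\Diamond A$ holds at $w$.

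For the converse halves I would fix, in each case, the worlds witnessing the failure and set $V(A)$ as follows.
\begin{itemize}
\item For $\mathsf{T}$, assume $\neg wRw$ and take $V(A)=R(w)$. Then $\Box A$ holds at $w$ but $A$ fails there.
\item For $\mathsf{B}$, assume $wRv$ and $\neg vRw$, and take $V(A)=\{w\}$. Then $A$ holds at $w$, but $\Diamond A$ fails at $v$, so $\Box\Diamond A$ fails at $w$.
\item For $\mathsf{4}$, assume $wRv$, $vRu$ and $\neg wRu$, and take $V(A)=R(w)$. Then $\Box A$ holds at $w$, while $A$ fails at $u$, so $\Box\Box A$ fails at $w$.
\item For $\mathsf{5}$, assume $wRv$, $wRu$ and $\neg vRu$, and take $V(A)=\{u\}$. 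Then $\Diamond A$ holds at $w$, but $\Diamond A$ fails at $v$.
\item For $\mathsf{D}$, assume $R(w)=\varnothing$. Then $\Box A$ holds vacuously at $w$ and $\Diamond A$ fails, whatever the valuation.
\end{itemize}

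I expect no genuine obstacle, since this is the classical Sahlqvist-style correspondence recalled from \cite{ref_blackburn2001}. The only step demanding care is making sure each countermodel valuation does what it must. For $\mathsf{4}$, I have to confirm that $u\notin R(w)$ is exactly what makes $A$ fail at $u$ while $\Box A$ still holds at $w$. For $\mathsf{5}$, the singleton $\{u\}$ must not accidentally be reachable from $v$; this is guaranteed by $\neg vRu$. I would also note explicitly that the frames here need not be reflexive, matching Definition~\ref{def:reference} and the self-loop-free $R_k$. As a result $\mathsf{T}$ is listed only for completeness, and the serial clause is automatically satisfied by every $\mathcal{F}_k$.
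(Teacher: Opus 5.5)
Your proposal is correct and follows the same route as the paper's proof. Soundness comes from unfolding the truth clauses, and the converse comes from a refuting valuation placed at the tuple where the frame condition fails; you simply write out the standard minimal valuations ($V(A)=R(w)$, $V(A)=\{w\}$, $V(A)=\{u\}$) that the paper leaves implicit. It is also worth noting that your countermodel for $\mathsf{5}$ does not assume $v\neq u$, so it covers the diagonal case as well and establishes the full Euclidean condition rather than only its off-diagonal part.
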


\textbf{Proof.} These are the Sahlqvist correspondences \cite{ref_blackburn2001}. Validity follows from the truth clauses; for the converse one refutes the axiom on any frame violating the condition by placing a suitable valuation at the offending tuple. $\blacksquare$

A $k$-nearest-neighbour relation has none of these properties by construction. It need not be symmetric, since $w_j$ may be among the $k$ nearest to $w_i$ without the reverse holding; it is almost never transitive or Euclidean at the scale of a whole corpus; and, excluding as it does the trivial self-edge, it is not even reflexive. The class of semantic frames is thus the class of arbitrary finite frames, whose modal logic is the basic system $\mathsf{K}$. Theorem~\ref{thm:corr} therefore does not apply to $\mathcal{F}_k$ as an all-or-nothing verdict; it applies, as we now argue, by degrees.

\section{Degrees of validation}\label{sec:degrees}

The frame conditions of Theorem~\ref{thm:corr} are first-order sentences of a uniform shape: a universally quantified implication between conjunctions of relational atoms, degenerating to a reflexive or a serial clause for $\mathsf{T}$ and $\mathsf{D}$. A finite frame either satisfies such a sentence or does not, but it also satisfies it to a measurable extent, namely on the proportion of the relevant tuples for which the implication does not fail. This proportion is the natural carrier of the empirical content of Table~\ref{tab:frame}.

\begin{defi}\label{def:degree}
Let $F=(W,R)$ be a finite frame and let $X$ be one of $\mathsf{T},\mathsf{B},\mathsf{4},\mathsf{5},\mathsf{D}$, with frame condition of antecedent $\varphi_X$ and consequent $\psi_X$ over the displayed tuple of worlds. The \emph{degree of validation} of $X$ on $F$ is
\[
\deg_X(F)=\frac{\big|\{\bar a : F\models\varphi_X(\bar a)\ \text{and}\ F\models\psi_X(\bar a)\}\big|}{\big|\{\bar a : F\models\varphi_X(\bar a)\}\big|},
\]
with the convention $\deg_X(F)=1$ when the antecedent is nowhere satisfied. Explicitly, $\deg_{\mathsf{B}}$ counts the pairs with $wRv$ for which also $vRw$; $\deg_{\mathsf{4}}$ the composable pairs $wRv,vRu$ for which also $wRu$; and $\deg_{\mathsf{T}},\deg_{\mathsf{D}}$ the worlds that are, respectively, reflexive and serial, over all of $W$.
\end{defi}

The Euclidean condition $\forall w\,v\,u\,(wRv\wedge wRu\to vRu)$ requires a separate word, because its quantified triple $(w,v,u)$ ranges over co-initial pairs of two kinds. The \emph{proper} pairs, with $v\neq u$, ask that two distinct worlds accessible from a common source be mutually accessible; the \emph{diagonal} pairs, with $v=u$, collapse the condition to $wRv\to vRv$ and so demand a self-loop on the target of every edge. The two are counted by two degrees:
\[
\deg_{\mathsf{5}}(F)=\frac{\big|\{(w,v,u):wRv,\ wRu,\ v\neq u,\ vRu\}\big|}{\big|\{(w,v,u):wRv,\ wRu,\ v\neq u\}\big|},
\qquad
\deg_{\mathsf{5}^{c}}(F)=\frac{\big|\{(w,v):wRv,\ vRv\}\big|}{\big|\{(w,v):wRv\}\big|},
\]
the proper, off-diagonal degree $\deg_{\mathsf{5}}$ measuring the genuine relational closure of distinct co-initial neighbours, and the diagonal degree $\deg_{\mathsf{5}^{c}}$ coinciding with the frequency of self-loops on edge-targets. It is $\deg_{\mathsf{5}}$ that the Euclideanity column of Table~\ref{tab:frame} records; $\deg_{\mathsf{5}^{c}}$, whose value in a self-loop-free $k$-nearest-neighbour frame is identically zero, is displayed nowhere in the tables and enters the account only through the calculus of Section~\ref{sec:calculus}. The reported statistic is thus an off-diagonal closure measure, not a complete degree of validation of $\mathsf{5}$; the two are related in Proposition~\ref{prop:extremes}.

\begin{prop}\label{prop:tableisdegree}
The Symmetry, Transitivity, Euclideanity and Seriality columns of Table~\ref{tab:frame} are, respectively, $\deg_{\mathsf{B}}$, $\deg_{\mathsf{4}}$, $\deg_{\mathsf{5}}$ and $\deg_{\mathsf{D}}$ of the human and artificial subframes.
\end{prop}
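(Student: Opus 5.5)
The plan is to unfold the definitions on both sides and check that they name the same ratio. The columns of Table~\ref{tab:frame} were computed by the standard graph estimators, and I fix them explicitly for the induced subframe $F=(W',R')$, with $W'$ the human (resp.\ artificial) texts and $R'=R_k\cap(W'\times W')$:
\begin{itemize}
\item the symmetry frequency is the share of directed edges $(w,v)\in R'$ whose reverse $(v,w)$ is also in $R'$;
\item the transitivity frequency is the share of directed two-paths $wR'v,\,vR'u$ closed by $wR'u$;
\item the Euclideanity frequency is the share of ordered co-initial pairs $(v,u)$ with $v\neq u$ out of a common $w$ that satisfy $vR'u$;
\item the seriality frequency is the share of $w\in W'$ with some $R'$-successor.
\end{itemize}

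Setting these against Definition~\ref{def:degree} and the displayed formula for $\deg_{\mathsf 5}$, the check goes frame condition by frame condition. For $\mathsf{B}$ the antecedent $\varphi_{\mathsf B}(w,v)$ is $wRv$ and the consequent is $vRw$, so the ratio is edge reciprocity. For $\mathsf 4$ the antecedent is $wRv\wedge vRu$ and the consequent is $wRu$, which is the transitivity ratio. For $\mathsf 5$ the proper off-diagonal degree is literally the Euclideanity ratio. For $\mathsf D$ the antecedent is trivially satisfied by every $w$, so the denominator is $|W'|$ and the numerator counts the serial worlds.

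Two points of detail need attention here. First, in the transitivity count one must decide whether triples with $u=w$ (two-cycles) are admitted. Definition~\ref{def:degree} does not exclude them, so I will state that the estimator ranges over all composable pairs, as the definition does. Second, no denominator vanishes: each subframe has edges, since the tabulated values are not the conventional $1$ except seriality, so the convention $\deg_X=1$ is never invoked spuriously.

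The main obstacle is seriality, and more generally the fact that the degrees are computed on the \emph{subframe}, not on $\mathcal{F}_k$. Restricting $R_k$ to a class may delete all $k$ out-neighbours of a text, so seriality of the subframe is not automatic. The text's remark that ``every vertex has $k$ outgoing neighbours by construction'' holds only if the $k$-nearest-neighbour relation is rebuilt within each subcorpus.

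I would therefore first make explicit that the subframes of Table~\ref{tab:frame} are the $k$-nearest-neighbour frames $\mathcal{F}_k$ computed on each subcorpus separately, with $k=10<61$. Under that reading every world has exactly $k$ successors, giving $\deg_{\mathsf D}=1$, in agreement with the table. The same reading fixes the edge sets over which the other three ratios are taken, and the identification is then a term-by-term equality of numerators and denominators.
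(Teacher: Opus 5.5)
Your proposal is correct and follows the paper's own route: the paper's proof is the one-line remark that each column of Table~\ref{tab:frame} is, by construction, the ratio of witnessing to eligible tuples in Definition~\ref{def:degree}, and you carry out that unfolding property by property. Your reading of the subframes as $k$-nearest-neighbour frames rebuilt on each subcorpus is the right one (it is what makes the human entries of Table~\ref{tab:balanced} coincide with those of Table~\ref{tab:frame}), so you should define $R'$ that way from the start instead of as $R_k\cap(W'\times W')$; out-degree $k$ then gives the non-zero denominators $nk$, $nk^2$, $nk(k-1)$ and $n$ directly, without appealing to the tabulated values.
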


\textbf{Proof.} Each reported frequency is, by the construction of Table~\ref{tab:frame}, the ratio of the tuples witnessing the property to the tuples eligible for it, which is the ratio of Definition~\ref{def:degree}. $\blacksquare$

\begin{prop}\label{prop:extremes}
For $X\in\{\mathsf{T},\mathsf{B},\mathsf{4},\mathsf{D}\}$ and every finite frame $F$, one has $\deg_X(F)=1$ if and only if $F$ satisfies the frame condition of $X$, hence if and only if $X$ is valid on $F$; the complement $1-\deg_X(F)$ measures the density of tuples on which the condition fails, and $\deg_X(F)=0$ when the property holds nowhere it is eligible. For Euclideanity the same role is played by the pair $(\deg_{\mathsf{5}},\deg_{\mathsf{5}^{c}})$: axiom $\mathsf{5}$ is valid on $F$ if and only if $\deg_{\mathsf{5}}(F)=\deg_{\mathsf{5}^{c}}(F)=1$, so that $\deg_{\mathsf{5}}(F)=1$ alone certifies the off-diagonal closure only, and not the full frame condition.
\end{prop}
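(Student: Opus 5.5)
The plan is to unwind Definition~\ref{def:degree} into a statement about a ratio of finite counts, and then to invoke Theorem~\ref{thm:corr} to pass from frame conditions to validity. Fix $X\in\{\mathsf{T},\mathsf{B},\mathsf{4},\mathsf{D}\}$. Write $E_X$ for the set of eligible tuples, those with $F\models\varphi_X(\bar a)$, and $S_X\subseteq E_X$ for those that also satisfy $\psi_X$. Both sets are finite because $W$ is finite. For $\mathsf{T}$ and $\mathsf{D}$ the antecedent is trivial, so $E_X=W$ and the convention for an empty antecedent never applies.

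The first step is the case $E_X\neq\varnothing$. There $\deg_X(F)=|S_X|/|E_X|$, and this ratio equals $1$ exactly when $S_X=E_X$. That in turn says that every tuple satisfying the antecedent satisfies the consequent, which is precisely $F\models\forall\bar a\,(\varphi_X\to\psi_X)$, the frame condition of $X$. The case $E_X=\varnothing$ is handled by the convention: then $\deg_X(F)=1$ by definition, and the universally quantified implication holds vacuously, so the equivalence survives. Theorem~\ref{thm:corr} then converts ``$F$ satisfies the condition'' into ``$X$ is valid on $F$'', because correspondence holds frame by frame and in particular for finite frames. The complementary reading is immediate from $1-\deg_X(F)=|E_X\setminus S_X|/|E_X|$, which is the density of failing tuples. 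Finally, $\deg_X(F)=0$ holds iff $S_X=\varnothing$ with $E_X\neq\varnothing$, that is, iff the property holds at no eligible tuple. I will state this with the caveat $E_X\neq\varnothing$, since the convention assigns $1$ otherwise.

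For Euclideanity I will split the quantified triples $(w,v,u)$ with $wRv\wedge wRu$ into the proper ones ($v\neq u$) and the diagonal ones ($v=u$), as in the discussion before the statement. The Euclidean condition holds iff it holds on both parts. On the proper part, the argument of the previous paragraph, applied to $\deg_{\mathsf{5}}$, shows that the condition holds there iff $\deg_{\mathsf{5}}(F)=1$. On the diagonal part the condition reduces to $wRv\to vRv$, which holds iff $\deg_{\mathsf{5}^c}(F)=1$. Both equivalences again rely on the empty-denominator convention, which I will extend explicitly to $\deg_{\mathsf{5}}$ and $\deg_{\mathsf{5}^c}$. Conjoining the two parts and applying Theorem~\ref{thm:corr} for $\mathsf{5}$ gives the stated biconditional.

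To show that $\deg_{\mathsf{5}}(F)=1$ alone is insufficient, I will exhibit a frame in which the proper closure holds but the diagonal fails. The simplest choice is $W=\{w,v\}$ with $R=\{(w,v)\}$: there are no proper co-initial pairs, so $\deg_{\mathsf{5}}=1$ by convention, while $\deg_{\mathsf{5}^c}=0$. The only delicate point in the whole argument is bookkeeping: making sure the convention is applied consistently to both Euclidean degrees, and checking that the diagonal reduction is exactly $wRv\to vRv$ (it is, by setting $u=v$). Beyond that, no real obstacle is expected.
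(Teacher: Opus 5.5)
Your proposal is correct and follows the paper's own argument. The ratio equals one exactly when every eligible tuple satisfies the consequent, which is the frame condition; Theorem~\ref{thm:corr} turns this into validity; and for $\mathsf{5}$ the triples split into proper and diagonal ones counted by $\deg_{\mathsf{5}}$ and $\deg_{\mathsf{5}^{c}}$. The paper leaves three points implicit that you make explicit and sound: the empty-denominator convention (including its extension to both Euclidean degrees), the caveat $E_X\neq\varnothing$ in the clause $\deg_X(F)=0$, and the frame $R=\{(w,v)\}$ witnessing $\deg_{\mathsf{5}}=1$ with $\deg_{\mathsf{5}^{c}}=0$.
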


\textbf{Proof.} For $X\in\{\mathsf{T},\mathsf{B},\mathsf{4},\mathsf{D}\}$ the numerator of Definition~\ref{def:degree} equals its denominator precisely when every eligible tuple satisfies the consequent, which is the frame condition; the equivalence with validity is Theorem~\ref{thm:corr}, and the remaining clauses are the complementary count. For Euclideanity the frame condition $\forall w\,v\,u\,(wRv\wedge wRu\to vRu)$ splits, according to whether $v=u$, into the proper closure counted by $\deg_{\mathsf{5}}$ and the diagonal clause $wRv\to vRv$ counted by $\deg_{\mathsf{5}^{c}}$; every quantified triple falls under one of the two, so the condition holds throughout exactly when both numerators meet their denominators, that is, when $\deg_{\mathsf{5}}(F)=\deg_{\mathsf{5}^{c}}(F)=1$, which by Theorem~\ref{thm:corr} is the validity of $\mathsf{5}$. $\blacksquare$

A degree of validation is therefore an interpolation between non-validity and validity.

Writing the four degrees of
Proposition~\ref{prop:tableisdegree} as the profile vector
$(\deg_{\mathsf{B}},\deg_{\mathsf{4}},
\deg_{\mathsf{5}},\deg_{\mathsf{D}})$, in which $\deg_{\mathsf{5}}$ is the
proper Euclidean degree of Definition~\ref{def:degree}, the complete-prompt
subframes yield
\[
\text{human: } (0.603,\,0.372,\,0.413,\,1.000),
\qquad
\text{artificial: } (0.613,\,0.487,\,0.541,\,1.000).
\]
The largest descriptive differences concern transitivity and Euclideanity.
Symmetry differs only weakly in the unequal-sized full subframes, whereas
seriality is complete in both groups by construction.

\begin{table}[htbp]
\centering
\caption{Descriptive modal differences in the complete-prompt subframes
using $k=10$.}
\label{tab:magnitude}
\begin{tabular}{lccc}
\toprule
Axiom & Human & AI-generated & AI--Human difference \\
\midrule
$\mathsf{B}$ & 0.603 & 0.613 & 0.010 \\
$\mathsf{4}$ & 0.372 & 0.487 & 0.115 \\
$\mathsf{5}$ & 0.413 & 0.541 & 0.128 \\
$\mathsf{D}$ & 1.000 & 1.000 & 0.000 \\
\bottomrule
\end{tabular}
\end{table}

These values are descriptive properties of the observed unequal-sized
subframes. They are not treated as independent proportions and are not used
to construct binomial standard errors or conventional two-sample tests.
The inferential comparison instead balances the number of vertices across
groups and preserves the common prompt structure.

\subsection{The degree as an estimator}\label{subsec:estimator}

The inferential target considered throughout this paper is not the hypothetical population of all possible human or AI-generated texts, but the distribution of modal validation degrees induced by the prompt-generation mechanism underlying the observed corpus. The reported degrees should therefore be interpreted as empirical plug-in estimates of the corresponding population validation probabilities under this data-generating process.
So far $\deg_X(F)$ is a deterministic ratio evaluated on an observed
finite frame. If the frame is regarded as one realisation of a broader
semantic-generative process, the corresponding population quantity may be
written as
\[
\pi_X
=
\Pr\!\left(
F\models\psi_X(\bar a)
\mid
F\models\varphi_X(\bar a)
\right),
\]
and the observed degree
\[
\widehat{\pi}_X=\deg_X(F)
\]
is its empirical plug-in estimate.

This representation does not imply that eligible relational tuples constitute
independent Bernoulli observations. In a nearest-neighbour graph, different
tuples may share vertices, directed edges, paths, and local neighbourhoods.
The dependence is particularly strong for transitivity and Euclideanity,
whose eligible configurations are constructed from overlapping pairs of
edges. Moreover, the corpus has a clustered prompt structure: each retained
prompt contributes one human text and three generated texts. Ordinary
binomial standard errors and tuple-level two-sample tests would therefore
understate the dependence present in the data.

\subsubsection{Prompt-balanced resampling}

The principal comparative analysis addresses both graph-size imbalance and
the prompt structure. All $61$ human texts are retained in every replication.
For each of the $61$ complete prompts, one of the three associated
AI-generated texts is selected at random. Each replication therefore compares
a human graph and an artificial graph containing the same number of vertices,
the same prompts, the same neighbourhood parameter $k$, and the same graph
density $k/(n-1)$.

The procedure was repeated $5{,}000$ times with random seed $12345$. For each
replication, the artificial $k$-nearest-neighbour graph was reconstructed and
its symmetry, transitivity, and Euclideanity degrees were recomputed. The
human graph remained fixed because each prompt has one observed human text.
For each modal property, uncertainty is summarised using the empirical
standard deviation and the $2.5$th and $97.5$th percentiles of the resampling
distribution. We also report the proportion of replications in which the
artificial degree exceeds the corresponding human degree. These percentile
ranges describe variation induced by the selection of one artificial response
per prompt; they are not binomial confidence intervals based on independent
relational tuples.

\begin{table}[htbp]
\centering
\caption{Prompt-balanced comparison of human and AI-generated frames.
Each replication contains $61$ human and $61$ artificial texts; $5{,}000$
replications, cosine distance and $k=10$.}
\label{tab:balanced}
\begin{tabular}{lccccc}
\toprule
Property
& Human
& Mean AI
& Mean difference
& 95\% percentile range
& $\Pr(\mathrm{AI}>\mathrm{H})$ \\
\midrule
Symmetry
& 0.603
& 0.643
& 0.040
& [0.000,\ 0.079]
& 0.970 \\

Transitivity
& 0.372
& 0.447
& 0.076
& [0.049,\ 0.104]
& 1.000 \\

Euclideanity
& 0.413
& 0.497
& 0.084
& [0.054,\ 0.116]
& 1.000 \\
\bottomrule
\end{tabular}
\end{table}

After balancing graph size, the artificial frames continue to exhibit greater
transitivity and Euclideanity in every resampling replication. The mean
differences are $0.076$ for transitivity and $0.084$ for Euclideanity, and
their percentile ranges remain entirely above zero. The symmetry difference
is smaller and less stable: its mean is $0.040$, its lower percentile bound is
zero, and the artificial value exceeds the human value in $97.0\%$ of the
replications. The principal empirical evidence therefore concerns
transitivity and Euclideanity rather than symmetry.

\subsubsection{Robustness analyses}

Three complementary analyses were conducted. First, a leave-one-prompt-out
jackknife removed one complete prompt cluster at a time, where a cluster
contains one human and three artificial texts. Across the $61$ jackknife
replications, the estimated AI--human differences remained positive on
average:
\[
\begin{aligned}
\mathsf{B}:&\quad 0.008
\quad (\mathrm{SE}_{\mathrm{jack}}=0.055),\\
\mathsf{4}:&\quad 0.113
\quad (\mathrm{SE}_{\mathrm{jack}}=0.049),\\
\mathsf{5}:&\quad 0.126
\quad (\mathrm{SE}_{\mathrm{jack}}=0.054).
\end{aligned}
\]
The jackknife is used as an influence analysis showing that the observed
differences are not generated by a single thematic prompt.

Second, the prompt-balanced resampling analysis was repeated for
$k\in\{5,8,10,12,15\}$. The mean AI--human transitivity differences were,
respectively,
\[
0.111,\quad 0.074,\quad 0.076,\quad 0.071,\quad 0.063,
\]
and the corresponding Euclideanity differences were
\[
0.139,\quad 0.084,\quad 0.084,\quad 0.077,\quad 0.068.
\]
For both properties the artificial degree exceeded the human degree in all
$5{,}000$ replications at every examined value of $k$. Symmetry was less
robust, with a mean difference of $-0.005$ at $k=5$ and positive mean
differences between $0.034$ and $0.050$ for $k=8,\ldots,15$.

Third, cosine distance was replaced by Euclidean distance after
$\ell_2$-normalising every embedding. The resulting neighbourhood graphs and
modal differences were identical: the replication-level correlations between
the cosine- and Euclidean-based differences were $1.000$ for symmetry,
transitivity, and Euclideanity. This equivalence is expected because, for
unit-normalised vectors, cosine distance and squared Euclidean distance are
monotone transformations of one another.

Taken together, these analyses show that the higher artificial transitivity
and Euclideanity are not explained by the original $1{:}3$ graph-size
imbalance, by one influential prompt, by the selected neighbourhood size, or
by the distinction between cosine and Euclidean distance on normalised
embeddings.

\begin{rem}\label{rem:graded}
The assignment $X\mapsto\deg_X(F)$ is a $[0,1]$-valued refinement of the Sahlqvist correspondence of Theorem~\ref{thm:corr}: the bivalent verdict ``$X$ is valid on $F$'' is recovered as the event $\deg_X(F)=1$, and intermediate values quantify the measure-theoretic extent of validity. This is not a many-valued logic in the algebraic sense --- the degrees are not truth-values closed under the connectives --- but a \emph{statistical valuation of axioms}, a frequency of correspondence, whose endpoints are the two faces of Theorem~\ref{thm:corr}.
\end{rem}

What these numbers do not yet say is what such an interpolation \emph{means} for inference, and that requires the calculus.

\section{From frequencies to syntactic constraints: the calculus $\mathsf{G3.K}$}\label{sec:calculus}

The labelled sequent calculus internalises the relational semantics by admitting two kinds of object into a sequent: \emph{labelled formulas} $w{:}A$, asserting that $A$ holds at the world named $w$, and \emph{relational atoms} $wRv$, asserting accessibility. A sequent $\Gamma\Rightarrow\Delta$ has finite multisets of such objects on each side, and the base calculus $\mathsf{G3.K}$ consists of the initial sequents $w{:}p,\Gamma\Rightarrow\Delta,w{:}p$ and $w{:}\bot,\Gamma\Rightarrow\Delta$, the usual left and right rules for the connectives applied at a label, and the four modal rules
\[
\frac{u{:}A,\ w{:}\Box A,\ wRu,\ \Gamma\Rightarrow\Delta}{w{:}\Box A,\ wRu,\ \Gamma\Rightarrow\Delta}\,L\Box
\qquad
\frac{wRv,\ \Gamma\Rightarrow\Delta,\ v{:}A}{\Gamma\Rightarrow\Delta,\ w{:}\Box A}\,R\Box
\]
\[
\frac{wRu,\ u{:}A,\ \Gamma\Rightarrow\Delta}{w{:}\Diamond A,\ \Gamma\Rightarrow\Delta}\,L\Diamond
\qquad
\frac{wRv,\ \Gamma\Rightarrow\Delta,\ w{:}\Diamond A,\ v{:}A}{wRv,\ \Gamma\Rightarrow\Delta,\ w{:}\Diamond A}\,R\Diamond
\]
with $R\Box$ and $L\Diamond$ carrying the eigenvariable condition that the accessible label ($v$ in $R\Box$, $u$ in $L\Diamond$) not occur in the conclusion. Read from conclusion to premise, $R\Box$ reduces a necessity at $w$ to a claim about an arbitrary accessible world and $L\Box$ discharges a necessity along an edge already present; the $\Diamond$ rules are their duals, taking $\Diamond$ as primitive rather than defined. The principal formula is repeated in the premise of $L\Box$ and $R\Diamond$ so that the boxed or diamonded formula remains available for further use. This is the calculus of Negri \cite{ref_negri2005}, governed throughout by a single discipline: the rules repeat their principal formulas in the premises, so that the structural rules need not be primitive. In Poggiolesi's terms these are \emph{external}, or ``internalised forcing'', calculi \cite{ref_poggiolesi2011}: one never derives a bare modal formula $\Box A$ but only labelled objects $w{:}\Box A$ together with relational atoms, so the subformula property claimed below is that of the extended labelled language rather than of the pure modal one --- a feature that is, for us, not a cost but the very hinge of the argument, since it is precisely the relational atoms that the licensing reading of Section~\ref{sec:degrees} counts.

\begin{thm}\label{thm:structural}
In $\mathsf{G3.K}$ all rules are height-preserving invertible, weakening and contraction are height-preserving admissible, and cut is admissible; the calculus is therefore cut-free and enjoys the subformula property in the labelled language.
\end{thm}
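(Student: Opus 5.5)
The proof follows Negri's standard route for labelled $\mathsf{G3}$ systems. It proceeds by a sequence of inductions on derivation height, in which each lemma feeds the next. I will first establish two preliminary facts. The first is that the generalised initial sequents $w{:}A,\Gamma\Rightarrow\Delta,w{:}A$ are derivable for arbitrary $A$, by induction on $A$; in the modal cases one applies $R\Box$ followed by $L\Box$ (dually $L\Diamond$ then $R\Diamond$) with a fresh label. The second is height-preserving \emph{substitution of labels}: if $\Gamma\Rightarrow\Delta$ is derivable with height $n$, then so is $\Gamma(v/w)\Rightarrow\Delta(v/w)$. This is shown by induction on $n$. When the substitution would clash with an eigenvariable of $R\Box$ or $L\Diamond$, one first renames that eigenvariable, using the induction hypothesis, to a label fresh for the whole derivation. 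Substitution is the lemma that makes every other step go through, since it is what lets eigenvariables be adjusted in the weakening and cut cases.

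Next comes height-preserving admissibility of weakening, for both labelled formulas and relational atoms, by induction on height. The only delicate cases are $R\Box$ and $L\Diamond$: when the weakening material contains the eigenvariable, substitution renames it first. Height-preserving invertibility then follows rule by rule. For the propositional rules, and for $L\Box$ and $R\Diamond$, it is immediate, because their premises either repeat the principal formula or follow from hp-weakening. For $R\Box$ and $L\Diamond$ one argues by induction on height: if $w{:}\Box A$ is principal, take the premise and substitute the chosen $v$ for the eigenvariable; otherwise apply the induction hypothesis to the premises and re-apply the last rule. Contraction is then shown hp-admissible by simultaneous induction on height for formulas and relational atoms. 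When the contracted formula is principal, invertibility is applied to the other copy and the induction hypothesis contracts the doubled premises. Contraction of relational atoms is trivial here, since $\mathsf{G3.K}$ has no relational rules and relational atoms are never principal on the right.

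Cut admissibility is the main obstacle. I will argue by a primary induction on the weight of the cut formula, where labels contribute nothing and $\Box,\Diamond$ add one, and a secondary induction on the sum of the heights of the two premises. The cases divide in the usual way. If one premise is initial, the cut is absorbed using contraction. If the cut formula is not principal in one premise, the cut is permuted upward, renaming eigenvariables by substitution where necessary. If it is principal in both, the cut is reduced. The critical case is $w{:}\Box A$, introduced by $R\Box$ with premise $wRv,\Gamma\Rightarrow\Delta,v{:}A$ on the left and by $L\Box$ on the right, with $wRu$ in the context. First I substitute $u$ for $v$ in the left premise, using hp-substitution and the eigenvariable condition. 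Then I cut $w{:}\Box A$ against the premise of $L\Box$, which is a cut of lower height. Finally I cut on $u{:}A$, which is of lower weight, and contract the duplicated context. The $\Diamond$ case is dual. Once cut is eliminated, every remaining rule has premises built from subformulas, at possibly different labels, of formulas in its conclusion together with relational atoms. This yields the subformula property for the labelled language, as qualified in the remarks preceding the statement.
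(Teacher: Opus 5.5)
Your proposal is correct and follows the paper's route. The paper simply invokes Negri's argument for the $\mathsf{G3}$ family: invertibility and weakening by induction on height, contraction via invertibility, and cut by induction on weight and cut height, with the $R\Box$/$L\Box$ principal case reduced by instantiating the eigenlabel to the side label. You also make explicit the height-preserving label-substitution lemma and the dual $\Diamond$ principal case, which the paper's sketch leaves implicit.

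One small correction: invertibility of the propositional rules is not immediate from weakening. It needs the same induction on height that you give for $R\Box$; only $L\Box$ and $R\Diamond$, which repeat their principal formula, are immediate.
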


\textbf{Proof.} The argument is the one given for the $\mathsf{G3}$ family in Negri \cite{ref_negri2005} and Negri and von Plato \cite{ref_negrivonplato2011}. Invertibility and the admissibility of weakening are inductions on derivation height; contraction is height-preserving admissible by induction on height using invertibility, the repetition of principal formulas absorbing the duplicated occurrences; and cut is eliminated by the standard induction on the pair consisting of the weight of the cut formula and the cut height, the only modal principal case being a cut on $w{:}\Box A$ between $R\Box$ and $L\Box$, reduced to a cut on $v{:}A$ of smaller weight after the eigenlabel of $R\Box$ is instantiated to the side label of $L\Box$. $\blacksquare$

These properties survive the addition of frame conditions, provided the conditions are written as rules of the right shape. A \emph{geometric} frame condition --- a sentence $\forall\bar z\,(\varphi\supset\psi)$ whose $\varphi,\psi$ are built from atoms by $\wedge,\vee,\exists$ alone, the form of all five of our conditions --- is internalised by a rule that, reading upward, \emph{adds} the atoms witnessing its consequent to the antecedent while retaining the atoms of its antecedent; where the consequent is existential, as for seriality, the witness is a fresh eigenvariable:
\[
\frac{wRw,\ \Gamma\Rightarrow\Delta}{\Gamma\Rightarrow\Delta}\,\mathrm{Ref}
\qquad
\frac{vRw,\ wRv,\ \Gamma\Rightarrow\Delta}{wRv,\ \Gamma\Rightarrow\Delta}\,\mathrm{Sym}
\qquad
\frac{wRv,\ \Gamma\Rightarrow\Delta}{\Gamma\Rightarrow\Delta}\,\mathrm{Ser}
\]
\[
\frac{wRu,\ wRv,\ vRu,\ \Gamma\Rightarrow\Delta}{wRv,\ vRu,\ \Gamma\Rightarrow\Delta}\,\mathrm{Trans}
\qquad
\frac{vRu,\ wRv,\ wRu,\ \Gamma\Rightarrow\Delta}{wRv,\ wRu,\ \Gamma\Rightarrow\Delta}\,\mathrm{Eucl}
\]
\[
\frac{vRv,\ wRv,\ \Gamma\Rightarrow\Delta}{wRv,\ \Gamma\Rightarrow\Delta}\,\mathrm{Eucl}^{c}
\]
with $\mathrm{Ser}$ subject to the eigenvariable condition on $v$, and $\mathrm{Eucl}^{c}$ the \emph{contracted instance} of $\mathrm{Eucl}$ obtained by identifying $u$ with $v$. Negri's theorem on geometric extensions guarantees that $\mathsf{G3.K}$ enlarged by any selection of these rules retains every clause of Theorem~\ref{thm:structural}, provided each rule is adjoined together with its contracted instances: the new rules act only on relational atoms, never on the cut formula, and the \emph{closure condition} --- that any contraction of two principal relational atoms in a conclusion again be an instance of a rule of the system --- is what keeps contraction, and with it cut, admissible \cite{ref_negri2003,ref_negrivonplato2011}. Among our five conditions this bites only for Euclideanity: its contracted instance $\mathrm{Eucl}^{c}$, the diagonal case $v=u$ that turns an edge $wRv$ into the self-loop $vRv$, must be present, while the reflexive, symmetric, transitive and serial rules are closed already. The correspondence is the expected one --- $\mathrm{Ref}$ realises $\mathsf{T}$, $\mathrm{Sym}$ realises $\mathsf{B}$, $\mathrm{Trans}$ realises $\mathsf{4}$, the pair $\{\mathrm{Eucl},\mathrm{Eucl}^{c}\}$ realises $\mathsf{5}$, and $\mathrm{Ser}$ realises $\mathsf{D}$ --- so that the columns of Table~\ref{tab:frame} stand in bijection with the non-logical rules one may adjoin.

Here the two halves of the paper meet. A relational rule is a schema, and over a fixed finite frame it has ground instances, obtained by interpreting its schematic labels as worlds so that the antecedent atoms hold in the frame. The instance asserts the consequent atom; whether the frame honours that assertion is exactly the question Definition~\ref{def:degree} counts.

\begin{prop}\label{prop:licensing}
Let $X$ be one of $\mathsf{T},\mathsf{B},\mathsf{4},\mathsf{5},\mathsf{D}$ and $\rho_X$ its relational rule. For a finite frame $F$, the degree $\deg_X(F)$ is the proportion of the ground instances of $\rho_X$ over $F$ --- those whose antecedent atoms hold in $F$ --- whose consequent atom also holds in $F$. Consequently $\rho_X$ is sound over $F$ if and only if $\deg_X(F)=1$; for $\deg_X(F)<1$ the instances counted by $\deg_X(F)$ are exactly those whose addition to $\mathsf{G3.K}$ is sound over $F$, and the remaining fraction $1-\deg_X(F)$ measures the syntactic overreach incurred by imposing $X$ globally on $F$. Writing $\mathcal{L}_X(F)$ for the set of sound ground instances, the calculus $\mathsf{G3.K}+\bigcup_X\mathcal{L}_X(F)$ is sound over $F$ by construction, retains cut-elimination --- each licensed instance is a geometric rule of the admissible shape of Theorem~\ref{thm:structural} --- and contains $\sum_X \deg_X(F)\cdot N_X$ licensed relational instances in all.
\end{prop}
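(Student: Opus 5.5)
The plan is to set up a bijection between ground instances of $\rho_X$ over $F$ and the eligible tuples of Definition~\ref{def:degree}, and then read off every clause by counting. Fix $X$ and read $\rho_X$ upward. Its conclusion carries the antecedent atoms of the frame condition, for instance $wRv$ for $\mathrm{Sym}$, or $wRv,vRu$ for $\mathrm{Trans}$. Its premise adds the consequent atom: $vRw$, $wRu$, $vRu$, or $vRv$ for $\mathrm{Eucl}^{c}$. For $\mathrm{Ref}$ and $\mathrm{Ser}$ the antecedent is empty, so a ground instance is simply a choice of world.

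A ground instance over $F$ is an assignment of worlds to the schematic labels under which the antecedent atoms hold in $F$. This is exactly a tuple $\bar a$ with $F\models\varphi_X(\bar a)$, that is, the denominator of Definition~\ref{def:degree}. The instances whose consequent atom also holds are exactly the numerator. For $\mathsf{5}$, I will take $\rho_{\mathsf{5}}$ to be $\mathrm{Eucl}$ restricted to $v\neq u$, matching $\deg_{\mathsf{5}}$. I will treat $\mathrm{Eucl}^{c}$ separately with $\deg_{\mathsf{5}^{c}}$, as Proposition~\ref{prop:extremes} already separates them. The first claim, that $\deg_X(F)$ is the proportion of instances whose consequent holds, then follows at once. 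Seriality is the one place needing care, because its consequent is existential: for a world $w$, the instance is honoured precisely when some $v$ with $wRv$ exists. So the ratio counts serial worlds over $W$, as in Definition~\ref{def:degree}.

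Next comes soundness. Interpret sequents over $F$ in the usual way: a sequent is true under an assignment of labels to worlds if, whenever every antecedent object holds, some succedent object holds. A single ground instance preserves truth from premise to conclusion if and only if its added atom holds in $F$. The backward direction is clear. For the forward direction, I would use a countermodel sequent built from the atoms of the instance: take $\Gamma$ and $\Delta$ so that the premise is derivable or true while the conclusion fails at that tuple. For example, put the consequent atom on the right as well.

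The counting clauses follow from this:
\begin{itemize}
\item $\rho_X$ is sound over $F$ if and only if every instance is honoured, which by the bijection is $\deg_X(F)=1$;
\item the licensed instances are those counted in the numerator;
\item the complement $1-\deg_X(F)$ is the fraction of unsound instances.
\end{itemize}
Soundness of $\mathsf{G3.K}+\bigcup_X\mathcal{L}_X(F)$ follows by induction on derivations. The base is Theorem~\ref{thm:structural}'s calculus, which is sound over any frame, and each added instance preserves truth by the previous step. The total count $\sum_X\deg_X(F)\cdot N_X$ is immediate, with $N_X$ the number of eligible tuples.

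I expect the main obstacle to be cut-elimination for the extension by ground instances. A ground instance is not schematic in its labels, and the closure condition is stated for schemata. My approach would be to treat each ground instance as a geometric rule with no eigenvariables: $\mathrm{Ser}$-instances whose witness is a specified neighbour become instances of the shape of $\mathrm{Ref}$-like zero-premise additions, which is harmless. Height-preserving substitution fails for fixed labels, so I would restrict attention to derivations whose labels are worlds of $F$ and argue as follows:
\begin{enumerate}
\item Weakening and invertibility go through unchanged.
\item Contraction needs the closure condition: any contraction of two principal atoms of a licensed instance must again be a licensed instance. I would check that identifying labels in an honoured instance yields a tuple whose consequent still holds in $F$, since it is literally the same atoms. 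The only delicate case is the $\mathsf{5}$ diagonal, which requires $\mathrm{Eucl}^{c}$-instances in $\mathcal{L}_{\mathsf{5}}(F)$. In a self-loop-free frame these are never licensed, and then the closure condition is met vacuously, because the proper-Euclid instances with $v\neq u$ never contract to a diagonal one within the licensed set.
\item Cut is then handled by Negri's argument, with the rules acting only on relational atoms.
\end{enumerate}
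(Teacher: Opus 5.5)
Apart from cut-elimination, your argument is the paper's. Ground instances of $\rho_X$ correspond to tuples with $F\models\varphi_X(\bar a)$, an instance is sound over $F$ exactly when its added atom holds, and the remaining clauses are counting. The paper settles cut-elimination in one sentence, calling the licensed instances geometric and invoking Negri's theorem. You are right that this is not immediate, since ground instances are not closed under substitution of labels. But the remedy you propose fails.

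If every label of a derivation is a world of $F$, then so are the eigenlabels of $R\Box$ and $L\Diamond$. Licensed instances can then act on them, and $R\Box$ stops being sound. Take $F$ with worlds $a,b,c$ and edges $aRb$, $aRc$, $bRa$, so that the $\mathrm{Sym}$-instance at $(a,b)$ is licensed. Start from the initial sequent $bRa,\ aRb,\ a{:}q\Rightarrow b{:}\Diamond q,\ a{:}q$. Applying $R\Diamond$, then $\mathrm{Sym}$ at $(a,b)$, then $R\Box$ with eigenlabel $b$ gives $a{:}q\Rightarrow a{:}\Box\Diamond q$. This sequent is false in $F$, because $c$ is accessible from $a$ and has no successor. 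So the restriction contradicts the soundness clause you have just established, and it also leaves only finitely many fresh labels.

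Nor does the restriction recover substitution, on which your items 1 and 3 silently rely. Height-preserving invertibility of $R\Box$ and $L\Diamond$, the renaming of eigenlabels in weakening, and the principal $R\Box/L\Box$ reduction in Negri's cut argument all replace an eigenlabel by another label throughout a derivation. With world labels, that replacement moves licensed instances onto tuples that need not be licensed. You observe yourself that height-preserving substitution fails for fixed labels, so you cannot then hand cut to an argument that uses it.

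The repair is to separate the two roles of labels:
\begin{itemize}
\item keep an infinite stock of label variables, interpreted by arbitrary assignments, alongside constants naming the worlds of $F$;
\item let licensed instances act only on constants;
\item require eigenlabels to be variables.
\end{itemize}
Every substitution used in the structural lemmas then replaces a variable, which leaves each ground instance literally unchanged. Hence substitution, invertibility and weakening hold as in $\mathsf{G3.K}$, soundness holds in the usual semantics, and Negri's cut reduction goes through, since the licensed rules never act on a cut formula.

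Your closure observation then completes contraction, with no appeal to self-loop-freeness. That appeal is unavailable anyway, since the proposition concerns an arbitrary finite $F$. A ground $\mathrm{Eucl}$-instance with $v\neq u$ has two distinct principal atoms and never meets the contraction case, so no $\mathrm{Eucl}^{c}$-instances need be adjoined. The only ground instance with coincident principal atoms, $\mathrm{Trans}$ at $(a,a,a)$, is absorbed by contracting its premise.
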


\textbf{Proof.} An instance of $\rho_X$ over $F$ is determined by an interpretation of its schematic labels satisfying the antecedent atoms, that is, by a tuple $\bar a$ with $F\models\varphi_X(\bar a)$; the instance is sound over $F$ precisely when its consequent atom holds, that is, when $F\models\psi_X(\bar a)$. The proportion of sound instances among applicable ones is therefore the ratio of Definition~\ref{def:degree}, namely $\deg_X(F)$. Global soundness of $\rho_X$ is soundness of every instance, equivalent to $\deg_X(F)=1$; the complementary clause is the count of unsound instances. Each licensed instance acts only on relational atoms and retains its antecedent atoms, so the extension is geometric and Theorem~\ref{thm:structural} applies; the total multiplies the licensed fraction $\deg_X(F)$ by the number $N_X$ of applicable instances and sums over $X$. $\blacksquare$

This is the transformation we sought, and the cut-elimination clause is what makes it well defined: because the licensed fragment is a geometric extension, its licensing density is an invariant of the calculus and not an accident of a particular proof search. The transitivity entries of Table~\ref{tab:frame} are no longer merely geometric descriptors: they quantify the fraction of applications of the rule $\mathrm{Trans}$ that each observed subframe licenses. The descriptive difference between the human and artificial corpora is therefore interpreted proof-theoretically as a difference in rule licensing, while its empirical uncertainty is evaluated separately through the dependence-aware prompt-balanced resampling procedure described in Section~\ref{subsec:estimator}. For Euclideanity the relational rule is the pair $\{\mathrm{Eucl},\mathrm{Eucl}^{c}\}$, to each member of which Proposition~\ref{prop:licensing} applies in turn: the proper rule $\mathrm{Eucl}$ is licensed to the degree $\deg_{\mathsf{5}}$ recorded in Table~\ref{tab:frame}, its contracted instance $\mathrm{Eucl}^{c}$ to the degree $\deg_{\mathsf{5}^{c}}$, so that the full axiom is licensed only where both densities reach one --- which, since $\deg_{\mathsf{5}^{c}}$ vanishes on a self-loop-free frame, they never do. A frequency has thereby become a constraint on a derivation without relying on independence assumptions for the underlying relational tuples.

The artificial corpus, validating seriality outright and exhibiting comparatively higher transitivity and Euclideanity than the human corpus, supports a calculus more densely populated by the rules associated with transitive--Euclidean systems, so that its inferential regime presses towards $\mathsf{K45}$ and, with seriality, $\mathsf{KD45}$ --- the logics of a closed, self-reinforcing space of mutually accessible worlds. The human corpus licenses these same rules so sparingly that little is added to the bare core, and its regime remains close to $\mathsf{K}$, where necessity reaches only as far as the explicitly present edges. One should resist the temptation to push the artificial corpus all the way to $\mathsf{S5}$: that system requires reflexivity, and the $k$-nearest-neighbour construction suppresses the self-edge by fiat, so the column that would carry $\deg_{\mathsf{T}}$ is structurally near zero and the last step from $\mathsf{KD45}$ to $\mathsf{S5}$ is one the observed graph does not support. The reported Euclidean density $\deg_{\mathsf{5}}$ of Table~\ref{tab:frame} is carried entirely by the proper, off-diagonal instances of $\mathrm{Eucl}$ ($v\neq u$); its contracted companion $\mathrm{Eucl}^{c}$, whose licensing density is the diagonal degree $\deg_{\mathsf{5}^{c}}$ of Definition~\ref{def:degree}, demands a self-loop $vRv$ on the target of every edge and so shares the fate of reflexivity, its density near zero in a self-loop-free frame. The suppression of $\mathsf{S5}$ is therefore twice determined --- once through $\deg_{\mathsf{T}}$, once through the contracted Euclidean fragment --- which is a defect not of the corpus but of the instrument, and the kind of thing the present reading makes it possible to say precisely.

It remains to see that an actual proof carries this graded weight. Consider factivity, the axiom $\mathsf{T}$, whose derivation exhibits the method in miniature.
\begin{prooftree}
\AxiomC{$w{:}A,\ wRw,\ w{:}\Box A \Rightarrow w{:}A$}
\RightLabel{$L\Box$}
\UnaryInfC{$wRw,\ w{:}\Box A \Rightarrow w{:}A$}
\RightLabel{$\mathrm{Ref}$}
\UnaryInfC{$w{:}\Box A \Rightarrow w{:}A$}
\RightLabel{$R{\to}$}
\UnaryInfC{$\Rightarrow w{:}\Box A \to A$}
\end{prooftree}
The derivation is correct in $\mathsf{G3.K}+\mathrm{Ref}$, and its single use of a non-logical rule is the step labelled $\mathrm{Ref}$, which posits the reflexive edge $wRw$. By Proposition~\ref{prop:licensing} that step is licensed by a frame to the degree $\deg_{\mathsf{T}}$, and we have just seen that for a neighbourhood graph this degree is essentially nil. The proof therefore stands as a piece of syntax while declaring, through the one rule it invokes, that the corpus underwrites it hardly at all: factivity is derivable but unsupported, the boxed content of a text descending to the text itself only on the vanishing set of self-accessible worlds. The same accounting applies verbatim to the derivations of $\mathsf{B},\mathsf{4},\mathsf{5}$ and $\mathsf{D}$, collected in the appendix, each of which turns on a single relational step --- $\mathrm{Sym}$, $\mathrm{Trans}$, $\mathrm{Eucl}$, $\mathrm{Ser}$ --- and so inherits, as its empirical weight, the corresponding entry of Table~\ref{tab:frame}, with its empirical uncertainty evaluated through the dependence-aware resampling procedure. A derivation in this calculus is thus never merely valid or invalid for the corpus; it is valid to a degree, the degree is read off the geometry of the neighbourhood graph, and the uncertainty surrounding that degree must be evaluated while accounting for the graph dependence structure. \S~\ref{subsec:tableaux} restates the same fact in a one-sided form in which the degree is nothing more, and nothing less, than a rate of set membership.

\subsection{A one-sided reformulation: sequent-style tableaux}\label{subsec:tableaux}

A two-sided sequent keeps apart, on the two flanks of the arrow, information that the licensing reading of Proposition~\ref{prop:licensing} never needs kept apart: every relational atom of $\mathsf{G3.K}$ lives on the antecedent alone, and $L\Box,R\Diamond$ merely repeat their principal formula there rather than moving it. The natural home of that reading is therefore not the two-sided sequent but the one-sided \emph{sequent-style tableaux} of Cuconato \cite{ref_cuconato_sttfol,ref_cuconato_sttpt}, a refutation calculus in which each node of the derivation tree carries a single finite \emph{block} $\Pi$ of formulas in place of a sequent. To a sequent $\Gamma\Rightarrow\Delta$ one associates the block $\Phi(\Gamma\Rightarrow\Delta)=\Gamma\cup\neg[\Delta]$, the antecedent together with the negations of the succedent formulas, so that the arrow is absorbed once and for all into a one-sided list of formulas supposed true; the rules of the cut-free sequent calculus are then read as decomposition rules on blocks, and the structural rules are absorbed into the set-theoretic reading of the block and into the closure condition on complementary pairs. Carried to the labelled modal language, the blocks hold labelled formulas $w{:}A$ and relational atoms $wRv$, a succedent formula $w{:}A$ entering the block as $w{:}\neg A$; under $\Phi$ the pairs $L\Box/R\Box$ and $L\Diamond/R\Diamond$ fuse into one persistent rule per operator, denoted $(\Box)$ and $(\Diamond)$, while $L\neg,R\neg$ leave no residue at all, the polarity of a formula being carried by the formula itself rather than by its side of the arrow. The relational rules pass to the single block unchanged:
\[
\frac{\Pi,\,wRv}{\Pi,\,wRv,\,vRw}\ (\mathrm{Sym})
\qquad
\frac{\Pi,\,wRv,\,vRu}{\Pi,\,wRv,\,vRu,\,wRu}\ (\mathrm{Trans})
\]
\[
\frac{\Pi,\,wRv,\,wRu}{\Pi,\,wRv,\,wRu,\,vRu}\ (\mathrm{Eucl})
\qquad
\frac{\Pi}{\Pi,\,wRv}\ (\mathrm{Ser})
\]
with $v$ fresh in $\mathrm{Ser}$, and with $\mathrm{Eucl}$ read, as in Definition~\ref{def:degree}, on its proper instances $v\neq u$, its diagonal instance $v=u$ adjoining the self-loop $vRv$.

That this transcription forfeits nothing is the content of the quotient theorem of Cuconato \cite{ref_cuconato2026tableaux}, which locates the labelled block calculus exactly against Negri's labelled sequent calculus. The map $\Phi$ is surjective, and its fibres consist of the sequents that differ only in the side on which a negated formula is displayed; along this quotient the rules of $\mathsf{G3.K}$ with primitive negation, together with the relational rules, descend to the block rules, every step of $L\neg$ projecting to the identity, every step of $R\neg$ to a double-negation step, and every remaining step to exactly one block step with the same branching and the same freshness conditions. Conversely every closed block derivation lifts to a two-sided derivation, so that a block derivation for $\Phi(\Gamma\Rightarrow\Delta)$ exists if and only if $\Gamma\Rightarrow\Delta$ is derivable in $\mathsf{G3.K}$, and the minimal block derivation is never larger than the minimal sequent one. The block calculus is thus $\mathsf{G3.K}$ seen through $\Phi$, modulo the bookkeeping of sides, and it inherits from Theorem~\ref{thm:structural} cut-freeness and the subformula property, now in the single language of blocks. This is the complete definition, and the complete justification, that the one-sided reading requires.

Call $\Pi_F:=\{\,wRv : w,v\in W,\ wRv\ \text{holds in}\ F\,\}$ the \emph{diagram} of the finite frame $F=(W,R)$: its complete positive relational record, prior to any valuation of atoms. An eligible ground instance of $\mathrm{Sym},\mathrm{Trans}$ or $\mathrm{Eucl}$ --- for $\mathrm{Eucl}$, a proper instance $wRv,wRu$ with $v\neq u$ --- is a tuple satisfying the rule's antecedent atoms in $\Pi_F$, and it is \emph{confirmed} if the atom the rule would add is already a member of $\Pi_F$, and \emph{speculative} otherwise; for $\mathrm{Ser}$, whose consequent is existential rather than a named atom, confirmation is read directly off $F$, a world counting as confirmed when it already possesses a recorded successor.

\begin{prop}[One-sided licensing]\label{prop:blocklicensing}
For $X\in\{\mathsf{B},\mathsf{4},\mathsf{5},\mathsf{D}\}$ with one-sided rule $\rho_X$, $\deg_X(F)$ is the proportion of eligible ground instances of $\rho_X$ on $\Pi_F$ that are confirmed. A derivation using one instance of $\rho_X$ is, in the sense of Proposition~\ref{prop:licensing}, licensed by $F$ exactly when that instance is confirmed, and $\deg_X(F)$ is the base rate at which $\rho_X$, applied to the diagram of $F$, is confirmed rather than speculative.
\end{prop}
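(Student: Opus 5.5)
The plan is to reduce the statement to Proposition~\ref{prop:licensing} by showing that, over the diagram $\Pi_F$, ``eligible'' coincides with ``applicable'' and ``confirmed'' coincides with ``sound''. The key fact is that $\Pi_F$ is the complete positive relational record of $F$: for any labels $w,v\in W$ one has $wRv\in\Pi_F$ if and only if $wRv$ holds in $F$. This is immediate from the definition of $\Pi_F$, and I would record it first as a one-line lemma, since everything else rests on it.

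I would then argue rule by rule for $\mathrm{Sym}$, $\mathrm{Trans}$ and proper $\mathrm{Eucl}$. A ground instance is eligible when its antecedent atoms lie in $\Pi_F$. By the lemma, this holds exactly when $F\models\varphi_X(\bar a)$ for the corresponding tuple $\bar a$; for $\mathrm{Eucl}$ I would also impose the side condition $v\neq u$, matching the denominator of $\deg_{\mathsf{5}}$. Such an instance is confirmed when its added atom ($vRw$, $wRu$, $vRu$ respectively) lies in $\Pi_F$, and by the lemma again this holds exactly when $F\models\psi_X(\bar a)$. Counting confirmed instances among eligible ones therefore reproduces numerator over denominator of Definition~\ref{def:degree}, and for Euclideanity of the proper-pair formula for $\deg_{\mathsf{5}}$. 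I would also check that distinct eligible instances correspond bijectively to distinct tuples. Ground instances are determined by the assignment of worlds to the schematic labels, and the block $\Pi$ is fixed to $\Pi_F$, so the count is a count of tuples.

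Seriality is the step I expect to need care, because its consequent is existential and the one-sided $\mathrm{Ser}$ introduces a fresh $v$ that is never in $\Pi_F$. Read literally, every instance would be speculative. I would therefore follow the statement's stipulation: an instance at world $w$ is eligible for every $w\in W$, since the antecedent is empty, and it counts as confirmed when some $v$ with $wRv\in\Pi_F$ exists. The proportion is then the fraction of serial worlds, which is $\deg_{\mathsf{D}}$ by Definition~\ref{def:degree}. I would add a short remark that this reading agrees with the sequent version. By the quotient theorem, a $\mathrm{Ser}$ step whose fresh label is instantiated to an existing successor is exactly the sound instance of Proposition~\ref{prop:licensing}.

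Finally, the licensing clause follows by transport along $\Phi$. By the quotient theorem, each block step of $\rho_X$ lifts to exactly one sequent step of the same relational rule with the same labels. That sequent step is sound over $F$ precisely when its consequent atom holds in $F$, which is precisely confirmation. A derivation using one instance is therefore licensed in the sense of Proposition~\ref{prop:licensing} exactly when the instance is confirmed. The ``base rate'' clause is then just the ratio from the previous paragraphs restated.
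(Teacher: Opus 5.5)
Your proposal is correct and follows the paper's own argument. The paper's proof is a one-line observation: by construction of $\Pi_F$, an eligible instance is exactly a tuple $\bar a$ with $F\models\varphi_X(\bar a)$, and a confirmed one is exactly a tuple with $F\models\psi_X(\bar a)$, so the partition is that of Definition~\ref{def:degree} and Proposition~\ref{prop:licensing} read on one side of the sequent. Your separate handling of $\mathrm{Ser}$ via the stipulated reading of confirmation, and your explicit transport of the licensing clause along $\Phi$, only spell out what the paper leaves implicit (one small correction: that a sound $\mathrm{Ser}$ instance is one at a world with a successor comes from the semantic reading in Proposition~\ref{prop:licensing}, not from the quotient theorem, which preserves fresh labels rather than instantiating them).
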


\textbf{Proof.} An eligible instance of $\rho_X$ on $\Pi_F$ is, by construction of $\Pi_F$, exactly a tuple $\bar a$ with $F\models\varphi_X(\bar a)$, and it is confirmed exactly when its consequent atom lies in $\Pi_F$, that is, when $F\models\psi_X(\bar a)$; this is the same partition of tuples used in Definition~\ref{def:degree} and Proposition~\ref{prop:licensing}, read on one side of the sequent instead of two. $\blacksquare$

\begin{rem}
The one-sided reading is not mere economy of notation: it makes the binary
validation indicator literal rather than analogical. An eligible instance of
$\rho_X$ on $\Pi_F$ either is or is not confirmed, a membership test in the
single set $\Pi_F$, and $\deg_X(F)$ is the empirical frequency with which
that test succeeds over the eligible instances. This binary representation
does not make the instances statistically independent, since different rule
applications may share vertices, edges, or neighbourhoods. Accordingly, the
uncertainty surrounding group differences is evaluated through the
prompt-balanced resampling and influence analyses of
\S~\ref{subsec:estimator}, rather than through a binomial standard error.
In the open-world regime of \S~\ref{sec:discussion}, confirmation against
$\Pi_F$ is replaced by confirmation against a sampling distribution, and
$\deg_X$ is interpreted as a plug-in estimate of the population rate rather
than as the population rate itself.
\end{rem} 

The translation is most transparent on the derivation already before us. Negating the conclusion of factivity into the block $w{:}\Box A,\,w{:}\neg A$:
\begin{prooftree}
\AxiomC{$w{:}\Box A,\, w{:}\neg A \quad (\Pi)$}
\RightLabel{$(\mathrm{Ref})$}
\UnaryInfC{$w{:}\Box A,\, w{:}\neg A,\, wRw$}
\RightLabel{$(\Box)$}
\UnaryInfC{$\{\,w{:}\Box A,\, w{:}\neg A,\, wRw,\, w{:}A\,\} \times$}
\end{prooftree}
the block acquires $wRw$ by $(\mathrm{Ref})$, then $w{:}A$ by $(\Box)$, and closes on the complementary pair $\{w{:}A,w{:}\neg A\}$. The closure is exactly as good, and no better, than the confirmation of $wRw$ in $\Pi_F$, which by the seriality-style reading above occurs with frequency $\deg_{\mathsf{T}}(F)$ --- essentially never, for a $k$-nearest-neighbour diagram from which the self-edge is excluded by construction. The block forms of $\mathsf{B},\mathsf{4},\mathsf{5},\mathsf{D}$ are obtained from the two-sided derivations of the appendix by the same mechanical translation and are not reproduced here.

The gain of the one-sided form is not one of notation only, and it bears on the two questions this paper joins, the separation of the corpora and its computational reading. Three points deserve record. The block form makes the estimator picture of \S~\ref{subsec:estimator} literal rather than analogical: a relational instance is confirmed exactly when a single atom belongs to the single set $\Pi_F$, so that $\deg_X(F)$ is the frequency of a membership test, and the passage from the closed-world diagram to the open-world sampling distribution is the passage from testing membership in $\Pi_F$ to estimating a membership probability --- the very passage the resampling of \S~\ref{subsec:estimator} carries out, now with a syntactic object beneath it. Next, the block calculus is a calculus of refutation, saturating a single block in search of a countermodel exactly as the empirical procedure saturates a neighbourhood in search of a class boundary; and since the sign rules leave no residue, the number of saturation cases that every completeness and termination argument must check is halved with respect to the two-sided calculus \cite{ref_cuconato2026tableaux}, so that the cost of deciding a modal claim over the corpus is that of the labelled calculus and no more. Most to the present point, the relational vocabulary of the block calculus is indifferent to the shape of the atoms it records, and this indifference is decisive exactly at Euclideanity. Among the closure conditions, the Euclidean one is the only one whose proper instances relate two co-initial neighbours $v,u$ that the derivation leaves otherwise unordered --- siblings in its creation forest --- and a calculus that mirrors accessibility by the nesting of its syntax, as the tree-hypersequent and nested-sequent calculi do, cannot in general record such a sibling edge without changing its data structure, whereas the labelled block calculus spends on $\mathrm{Eucl}$ the same lines it spends on any other relational rule \cite{ref_cuconato2026tableaux}. Since Euclideanity is, with transitivity, one of the two axioms on which the human and artificial corpora separate most sharply, the format in which the separation is most naturally written is precisely the one that treats the discriminating axiom uniformly, and this is the sense in which the calculus is not merely an alternative notation but the apparatus fitted to the phenomenon.

\section{Discussion and conclusions}\label{sec:discussion}

The contribution of this paper is a single change of viewpoint, made precise twice. The frequencies with which a semantic neighbourhood graph is symmetric, transitive or Euclidean have been treated, in the literature that measures them, as descriptive geometry; we have shown that they are degrees of validation of the normal modal axioms $\mathsf{B},\mathsf{4},\mathsf{5},\mathsf{D}$, that through the labelled calculus of Negri each such degree is a syntactic quantity --- the proportion of the applications of a structural rule that the corpus licenses --- and that the same number is a statistical one, an empirical estimate of a licensing probability whose uncertainty must be evaluated through dependence-aware inference. The human and artificial corpora are thereby separated not merely by a
metaphor but by their position in a calculus. The empirical comparison is
strongest for the closure axioms $\mathsf{4}$ and $\mathsf{5}$: higher
artificial transitivity and Euclideanity persist after matching graph size
and prompt composition, removing individual prompt clusters, varying the
neighbourhood size, and replacing cosine distance with Euclidean distance on
normalised embeddings. Symmetry provides weaker and more
specification-sensitive evidence, while seriality is complete in both
$k$-nearest-neighbour subframes by construction. Beneath this, the situated relation $\Vdash_\sigma$, now graded by the degree of groundedness $g$ and the degree of situatedness $\mathrm{sit}$, records the same divide at the level of aboutness: by Proposition~\ref{prop:collapse} artificial, placeless texts ($g\to 0$) let the situational semantics collapse onto classical consequence, whereas place-grounded human texts keep the constraint of pertinence alive, the residual dispersion of their referents being measured by $\mathrm{sit}$.

The result invites two cautions, which are also its limits. First, the
resampling distributions should not be interpreted as arising from
independent relational tuples. The tuples defining symmetry, transitivity,
and Euclideanity overlap extensively within each graph. Our principal
uncertainty analysis therefore operates at the prompt level and reconstructs
equal-sized graphs by selecting one artificial response for each complete
prompt. The resulting percentile ranges quantify sensitivity to the observed
set of artificial responses conditional on the retained human corpus and
prompt design; they are not conventional confidence intervals for an
independent random sample of texts from an unrestricted population.
The leave-one-prompt-out jackknife serves a complementary purpose, assessing
whether any single thematic cluster disproportionately determines the
results. The closed-world to open-world passage is, in this language, the
passage from the observed plug-in estimate
$\widehat{\pi}_X=\deg_X(F)$ to a broader sampling distribution whose
interpretation depends on the mechanism by which prompts and texts are
generated. Second, the logic recovers only part of the data. The interior and boundary rates of Table~\ref{tab:boundary} translate into the modal conditions $\Box\alpha$ and $\Diamond\alpha\wedge\Diamond\neg\alpha$, and Proposition~\ref{prop:entropy} carries the local entropy along with them; but the global descriptors of Table~\ref{tab:global} and the centrality figures are metric invariants with no deductive counterpart, and we have left them outside the calculus deliberately rather than force an interpretation upon them.

\begin{figure}[htbp]
\centering
\resizebox{\textwidth}{!}{%
\begin{tikzpicture}[>={Stealth[length=5pt]}, line width=0.5pt, font=\small,
  box/.style={draw, rounded corners=2pt, align=center, inner sep=5pt},
  elab/.style={font=\scriptsize, fill=white, inner sep=1.5pt}]
  \node[box, text width=44mm] (geo) at (0,2.5)
    {\textsc{geometry}\\[1pt]\footnotesize $\mathcal{F}_k=(W,R_k)$: transformer $k$-NN frame\\ relational frequencies (Table~\ref{tab:frame})};
  \node[box, inner sep=5pt] (deg) at (0,0)
    {$\deg_X(F)\in[0,1]$\\[1pt]\footnotesize degree of validation};
  \node[box, text width=30mm] (pt) at (-4.9,0)
    {\textsc{proof theory}\\[1pt]\footnotesize $\mathsf{G3.K}+\rho_X$;\\ diagram $\Pi_F$;\\ sequent-style tableaux};
  \node[box, text width=30mm] (st) at (4.9,0)
    {\textsc{statistics}\\[1pt]\footnotesize $\widehat{\pi}_X=\deg_X(F)$;\\ prompt-balanced\\ estimate};
  \node[box, text width=62mm] (ai) at (0,-2.6)
    {\textsc{human vs.\ AI text}\\[1pt]\footnotesize closed artificial $\approx\mathsf{K45}$ \quad vs.\quad permeable human $\approx\mathsf{K}$};
  \draw[->] (geo) -- (deg) node[elab, midway, right=1pt] {frequency};
  \draw[<->] (pt) -- (deg) node[elab, midway, above=1pt] {licensing};
  \draw[<->] (st) -- (deg) node[elab, midway, above=1pt] {estimate};
  \draw[->] (deg) -- (ai) node[elab, midway, right=1pt] {separation};
\end{tikzpicture}%
}
\caption{The construction as a two-way dictionary. A single quantity, the degree of validation $\deg_X(F)$ of a modal axiom $X\in\{\mathsf{B},\mathsf{4},\mathsf{5},\mathsf{D}\}$ on the $k$-nearest-neighbour frame, is read at once as a relational frequency of the embedding geometry, as the licensing density of a structural rule $\rho_X$ in the labelled calculus $\mathsf{G3.K}$ and its one-sided block form on the diagram $\Pi_F$, and as a plug-in estimate $\widehat{\pi}_X$ of a population probability; the three readings coincide, and their common value separates the closed artificial regime near $\mathsf{K45}$ from the permeable human one near $\mathsf{K}$.}
\label{fig:dictionary}
\end{figure}

What the construction does deliver is a dictionary, exact in both directions, between a measurement on embeddings and a constraint on proofs, and between a constraint on proofs and a parameter to be estimated, as Figure~\ref{fig:dictionary} records. The natural continuation is twofold: to lift the dictionary from the closed-world to the open-world regime under alternative graph-sampling designs, and to give the degree of situatedness $\mathrm{sit}$ its own empirical estimator from the geography of georeferenced markers, so that aboutness --- and not only accessibility --- leaves a syntactic trace, and the silhouette, neighbourhood-preservation and within-class concentration of Table~\ref{tab:global} find their place not merely as substrate but as observable correlates of the lattice the reference function populates.

\section*{Data and code availability}
The transformer embeddings, the derived $k$-nearest-neighbour graphs, and the scripts that compute the relational frequencies, the prompt-balanced resampling, and the robustness analyses of Section~\ref{sec:degrees} are available from the authors on reasonable request.

\section*{Ethics and informed consent}
The human narratives analysed in this study were collected through an anonymous questionnaire. Participants were informed of the research purpose and consented to the use of their anonymised responses; no personal identifiers were retained in the analysed corpus, and the texts were processed solely in the aggregate form described in Section~\ref{sec:empirical}.

\appendix
\section{Derivations of the characteristic axioms}\label{app}

Each derivation below is correct in $\mathsf{G3.K}$ extended by the single relational rule named, and turns on one application of it; by Proposition~\ref{prop:licensing} it is licensed by a subframe to the degree recorded in the corresponding column of Table~\ref{tab:frame}, with the uncertainty surrounding that degree evaluated through the dependence-aware empirical analysis. The displayed leaves are not initial sequents in the strict sense --- an initial sequent requires an atomic $p$ --- but identity sequents $w{:}A,\Gamma\Rightarrow\Delta,w{:}A$ with $A$ arbitrary, which are height-preserving derivable in $\mathsf{G3.K}$ (Negri and von Plato \cite{ref_negrivonplato2011}); we display them as leaves to keep the single relational step in focus. Each derivation admits, by the translation of \S~\ref{subsec:tableaux}, a one-sided block equivalent turning on the same single confirmed or speculative atom; we display only the two-sided form here, since the passage between the two is mechanical and worked once, for $\mathsf{T}$, in \S~\ref{subsec:tableaux}.

\medskip
\noindent\emph{Symmetry, $\mathsf{B}=A\to\Box\Diamond A$, in $\mathsf{G3.K}+\mathrm{Sym}$.}
\begin{prooftree}
\AxiomC{$vRw,\ wRv,\ w{:}A \Rightarrow v{:}\Diamond A,\ w{:}A$}
\RightLabel{$R\Diamond$}
\UnaryInfC{$vRw,\ wRv,\ w{:}A \Rightarrow v{:}\Diamond A$}
\RightLabel{$\mathrm{Sym}$}
\UnaryInfC{$wRv,\ w{:}A \Rightarrow v{:}\Diamond A$}
\RightLabel{$R\Box$}
\UnaryInfC{$w{:}A \Rightarrow w{:}\Box\Diamond A$}
\RightLabel{$R{\to}$}
\UnaryInfC{$\Rightarrow w{:}A\to\Box\Diamond A$}
\end{prooftree}

\noindent\emph{Transitivity, $\mathsf{4}=\Box A\to\Box\Box A$, in $\mathsf{G3.K}+\mathrm{Trans}$.}
{\small
\begin{prooftree}
\AxiomC{$u{:}A,\ wRu,\ vRu,\ wRv,\ w{:}\Box A \Rightarrow u{:}A$}
\RightLabel{$L\Box$}
\UnaryInfC{$wRu,\ vRu,\ wRv,\ w{:}\Box A \Rightarrow u{:}A$}
\RightLabel{$\mathrm{Trans}$}
\UnaryInfC{$vRu,\ wRv,\ w{:}\Box A \Rightarrow u{:}A$}
\RightLabel{$R\Box$}
\UnaryInfC{$wRv,\ w{:}\Box A \Rightarrow v{:}\Box A$}
\RightLabel{$R\Box$}
\UnaryInfC{$w{:}\Box A \Rightarrow w{:}\Box\Box A$}
\RightLabel{$R{\to}$}
\UnaryInfC{$\Rightarrow w{:}\Box A\to\Box\Box A$}
\end{prooftree}}

\noindent\emph{Euclideanity, $\mathsf{5}=\Diamond A\to\Box\Diamond A$, in $\mathsf{G3.K}+\mathrm{Eucl}$.}
{\small
\begin{prooftree}
\AxiomC{$vRu,\ u{:}A,\ wRu,\ wRv,\ w{:}\Diamond A \Rightarrow v{:}\Diamond A,\ u{:}A$}
\RightLabel{$R\Diamond$}
\UnaryInfC{$vRu,\ u{:}A,\ wRu,\ wRv,\ w{:}\Diamond A \Rightarrow v{:}\Diamond A$}
\RightLabel{$\mathrm{Eucl}$}
\UnaryInfC{$u{:}A,\ wRu,\ wRv,\ w{:}\Diamond A \Rightarrow v{:}\Diamond A$}
\RightLabel{$L\Diamond$}
\UnaryInfC{$wRv,\ w{:}\Diamond A \Rightarrow v{:}\Diamond A$}
\RightLabel{$R\Box$}
\UnaryInfC{$w{:}\Diamond A \Rightarrow w{:}\Box\Diamond A$}
\RightLabel{$R{\to}$}
\UnaryInfC{$\Rightarrow w{:}\Diamond A\to\Box\Diamond A$}
\end{prooftree}}

\noindent\emph{Seriality, $\mathsf{D}=\Box A\to\Diamond A$, in $\mathsf{G3.K}+\mathrm{Ser}$.}
\begin{prooftree}
\AxiomC{$v{:}A,\ wRv,\ w{:}\Box A \Rightarrow w{:}\Diamond A,\ v{:}A$}
\RightLabel{$R\Diamond$}
\UnaryInfC{$v{:}A,\ wRv,\ w{:}\Box A \Rightarrow w{:}\Diamond A$}
\RightLabel{$L\Box$}
\UnaryInfC{$wRv,\ w{:}\Box A \Rightarrow w{:}\Diamond A$}
\RightLabel{$\mathrm{Ser}$}
\UnaryInfC{$w{:}\Box A \Rightarrow w{:}\Diamond A$}
\RightLabel{$R{\to}$}
\UnaryInfC{$\Rightarrow w{:}\Box A\to\Diamond A$}
\end{prooftree}

%=====================================================================
% BIBLIOGRAPHY
% This is the general bibliography you supplied, reproduced faithfully
% (only broken accent commands were repaired so the file compiles).
% Seven entries marked "% ADDED" were not present in the supplied list
% but are cited in the text and are indispensable (Negri's proof theory,
% the modal-correspondence references, and the statistics references).
% If you prefer only cited works to appear, migrate to a biblio.bib and
% \bibliography{biblio}; with thebibliography every listed entry prints.
%=====================================================================

\end{document}